\numberwithin{equation}{section}
\newtheorem{theorem}{Theorem}[section]
\newtheorem{proposition}[theorem]{Proposition}
\newtheorem{lemma}[theorem]{Lemma}
\newtheorem{corollary}[theorem]{Corollary}
\newtheorem{defi}[theorem]{Definition}
\newtheorem{exam}[theorem]{Example}
\newtheorem{rema}[theorem]{Remark}
\newenvironment{example}{\rm\begin{exam}\rm}{\end{exam}}
\newenvironment{remark}{\rm\begin{rema}\rm}{\end{rema}}
\newcommand{\demph}[1]{\emph{\Blue{#1}}}
\def\Cofree{\mathsf{C}}
\newcommand{\calC}{\mathcal{C}}
\newcommand{\calD}{\mathcal{D}}
\newcommand{\calE}{\mathcal{E}}
\newcommand{\K}{\mathbb{K}}
\def\c{\mathfrak C}
\def\ck{{\mathcal C}{\mathcal K}}
\def\m{\mathcal M}
\def\p{\mathcal P}
\def\s{\mathfrak S}
\def\y{\mathcal Y}
\def\csym{\c\mathit{Sym}}
\def\cksym{\ck\mathit{Sym}}
\def\msym{\m\mathit{Sym}}
\def\psym{\mathcal PSym}
\def\ssym{\s\mathit{Sym}}
\def\ysym{\y\mathit{Sym}}
\def\btau{\bm\tau}
\def\bkappa{\bm\kappa}
\def\bvarphi{\bm\varphi}
\def\bb{\boldbullet}
    \def\boldbullet{{\!\mbox{\LARGE$\mathbf\cdot$}}}
\newcommand{\Placeholder}{\raisebox{.09ex}{\footnotesize $\bullet$}}
\newcommand{\sn}[1]{{\small\hspace{2pt}$#1$}}
\def\id{\mathbbm{1}} 
\def\psplit{\stackrel{\curlyvee}{\to}}
\newcommand{\CTO}[1]{\begin{picture}(4,11)\put(0,0){\includegraphics{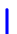}}
    \put(0.6,8.5){\tiny #1}\end{picture}}
\newcommand{\CTI}[2]{\begin{picture}(11.5,12)(-1,0)\put(1.5,0){\includegraphics{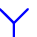}}
    \put(0,9){\tiny #1} \put(8,9){\tiny #2} \end{picture}}
\newcommand{\CTIT}[3]{\begin{picture}(15,13)(-1.2,0)\put(1.5,0){\includegraphics{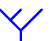}}
    \put(0,10){\tiny #1} \put(6,10){\tiny #2}\put(12,10){\tiny #3} \end{picture}}
\def\treeseparator{\bm\cdot}
\def\treeseparatorinline{\bm\cdot}
\newcommand{\compose}[3]{\frac{{#1{\treeseparator}\dotsb{\treeseparator}#2}}{#3}}
\newcommand{\lcompose}[4]{\frac{{#1{\treeseparator}#2{\treeseparator}\dotsb{\treeseparator}#3}}{#4}}
\newcommand{\rcompose}[4]{\frac{{#1{\treeseparator}\dotsb{\treeseparator}#2{\treeseparator}#3}}{#4}}
\newcommand{\zcompose}[2]{\frac{{#1}}{#2}}
\newcommand{\lrcompose}[5]{\frac{{#1{\treeseparator}#2{\treeseparator}\dotsb{\treeseparator}#3{\treeseparator}#4}}{#5}}
\newcommand{\composeinline}[3]{{#3{\,\circ\,}({#1{\treeseparatorinline}\dotsb{\treeseparatorinline}#2})}}
\newcommand{\zcomposeinline}[2]{{{#2}{\,\circ\,}(#1)}}
\author{Stefan Forcey~\addressmark{1}\!,
        Aaron Lauve\addressmark{2}\thanks{Supported in part by NSA grant
          H98230-10-1-0362.}\,, \and
        Frank Sottile\addressmark{3}\thanks{Supported in part by NSF grants DMS-0701050 and DMS-1001615.}}
\title[Cofree  compositions of  coalgebras]{Cofree compositions of coalgebras\\(extended abstract)}
\address{\addressmark{1}Department of Theoretical and Applied Mathematics,
    The University of Akron, Akron, OH 44325 USA\\
\addressmark{2}Department of Mathematics,
    Loyola University of Chicago,
    Chicago, IL 60660   USA\\
\addressmark{3}Department of Mathematics,
    Texas A\&M University,
    College Station, TX 77843  USA}
\keywords{multiplihedron, cofree coalgebra, Hopf algebra, operad, species}
\begin{document}
\maketitle
\begin{abstract}
\paragraph{Abstract.}
  We develop the notion of the composition of two coalgebras,
 which arises naturally in higher category theory and the theory of species.
 We prove that the composition of two cofree coalgebras is cofree
 and give conditions which imply that the composition is a one-sided Hopf algebra.
 These conditions hold when
 one coalgebra is a graded Hopf operad $\calD$ and the other is a connected graded coalgebra
 with coalgebra map to $\calD$.
 We conclude by discussing these structures for compositions with bases
 the vertices of multiplihedra, composihedra, and hypercubes.


\end{abstract}

\section{Introduction}\label{sec: intro}

The Hopf algebras of ordered trees~(\cite{MalReu:1995}) and of
planar binary trees~(\cite{LodRon:1998}) are cofree coalgebras that
are connected by cellular maps from permutahedra to associahedra.
Related polytopes include the multiplihedra~(\cite{Sta:1970}) and
the composihedra~(\cite{forcey2}), and it is natural to study what
Hopf structures may be placed on these objects. The map from
permutahedra to associahedra factors through the multiplihedra, and
in~(\cite{FLS:1}) we used this factorization to place Hopf
structures on bi-leveled trees, which correspond to vertices of
multiplihedra.

Multiplihedra form an operad module over associahedra. This leads to
painted trees, which also correspond to the vertices of the
multiplihedra. In terms of painted trees, the Hopf structures
of~(\cite{FLS:1}) are related to the operad module structure. We
generalize this in Section~\ref{sec: cccc}, defining the functorial
construction of a graded coalgebra $\calD\circ\calC$ from graded
coalgebras $\calC$ and $\calD$. We show that this composition of
coalgebras preserves cofreeness. In Section \ref{sec: hopf},  give
sufficient conditions when $\calD$ is a Hopf algebra for the
composition of coalgebras $\calD\circ\calC$ (and $\calC\circ\calD$)
to be a one-sided Hopf algebra. These conditions also guarantee that
a composition is a Hopf module and a comodule algebra over $\calD$.

This composition arises in the theory of operads and in the theory
of species, as species and operads are one-and-the-same~\cite[App.
B]{AguMah:2010}. In Section \ref{sec: hopf} we show that an operad
$\calD$ of connected graded coalgebras is automatically a Hopf
algebra.

 We discuss three examples related to well-known objects from
category theory and algebraic topology and show that the Hopf
algebra of simplices of~(\cite{ForSpr:2010}) is cofree as a
coalgebra.

\section{Preliminaries}\label{sec: prelims}

We work over a fixed field $\K$ of characteristic zero. For a graded
vector space $V = \bigoplus_n V_n$, we write $|v|=n$ and say $v$ has
\demph{degree} $n$ if $v\in V_n$.

\subsection{Hopf algebras and cofree coalgebras}\label{sec: cofree def}

A Hopf algebra $H$ is a unital associative algebra equipped with a
coassociative coproduct homomorphism $\Delta\colon H\to  H\otimes H$
and a counit homorphism $\varepsilon\colon H\to \K$ which plays the
role of the identity for $\Delta$. See~(\cite{Mont:1993}) for more
details. \cite{Tak:71} showed that a graded bialgebra
$H=(\bigoplus_{n\geq0}H_n,\bm\cdot,\Delta,\varepsilon)$ that is
connected ($H_0 = \K$) is a Hopf algebra.

A coalgebra $\calC$ is a vector space $\calC$ equipped with a
coassociative coproduct $\Delta$ and counit $\varepsilon$. For
$c\in\calC$, write  $\Delta(c)$ as $\sum_{(c)} c'\otimes c''$.
Coassociativity means that
\[
    \sum_{(c),(c')} (c')'\otimes (c')'' \otimes c''\ =\
    \sum_{(c),(c'')} c'\otimes (c'')' \otimes (c'')''\ =\
    \sum_{(c)} c'\otimes c'' \otimes c''' \,,
\]
and the counit condition means that $\sum_{(c)} \varepsilon(c')c'' =
\sum_{(c)} c'\varepsilon(c'') = c$.

The \demph{cofree coalgebra} on a vector space $V$ is $\Cofree(V):=
\bigoplus_{n\geq0}V^{\otimes n}$ with counit the projection
$\varepsilon\colon\Cofree(V)\to\K=V^{\otimes 0}$ and the
\demph{deconcatenation coproduct}: writing ``$\backslash$'' for the
tensor product in $V^{\otimes n}$, we have
\[
    \Delta(c_1\backslash \dotsb\backslash c_n)\ =\
    \sum_{i=0}^{n} (c_1\backslash \dotsb\backslash c_{i})
        \otimes (c_{i+1}\backslash \dotsb\backslash c_n) \,.
\]
Observe that $V$ is the set of primitive elements of $\Cofree(V)$. A
coalgebra $\calC$ is \demph{cofree} if
$\calC\simeq\Cofree(P_\calC)$, where $P_\calC$ is the space of
primitive elements of $\calC$. Many coalgebras arising in
combinatorics are cofree.

\subsection{Cofree Hopf algebras on trees}\label{sec: Hopf examples}

We describe three cofree Hopf algebras built on rooted planar binary
trees: \demph{ordered trees} $\s_n$, \demph{binary trees} $\y_n$,
and \demph{(left) combs} $\c_n$ on $n$ internal nodes. Set $\s_\bb
:=\bigcup_{n\geq0} \s_n$ and define $\y_\bb$ and $\c_\bb$ similarly.

\subsubsection{Constructions on trees}\label{sec: trees}

The nodes of a tree $t\in\y_n$ form a poset. An \demph{ordered tree
$w=w(t)$} is a linear extension of this node poset of $t$. This
linear extension is indicated by placing a permutation in the gaps
between its leaves, which gives a bijection between ordered trees
and permutations. The map $\tau \colon \s_n \to \y_n$ sends an
ordered tree $w(t)$ to its underlying tree $t$. The map $\kappa
\colon \y_n \to \c_n$ shifts all nodes of a tree to the right branch
from the root. Set $\s_0=\y_0=\c_0=\includegraphics{0.eps}$. Note
that $|\c_n|=1$ for all $n\geq0$.
\[
\begin{picture}(85,83)(0,5)
   \thicklines
   \put(11,4){\small ordered trees $\s_\bb$}
   \put(0,55){%
      \begin{picture}(40,35)
         \put(0,0){\includegraphics{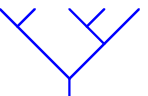}}
      \put(0,27){\sn{3}} \put(10,27){\sn{4}} \put(20,27){\sn{1}} \put(30,27){\sn{2}}
      \end{picture}}
   \put(45,35){%
      \begin{picture}(40,35)
         \put(0,0){\includegraphics{34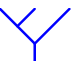}}
      \put(0,27){\sn{2}} \put(10,27){\sn{4}} \put(20,27){\sn{1}} \put(30,27){\sn{3}}
      \end{picture}}
   \put(0,15){%
      \begin{picture}(40,35)
         \put(0,0){\includegraphics{3412.d.eps}}
      \put(0,27){\sn{1}} \put(10,27){\sn{4}} \put(20,27){\sn{2}} \put(30,27){\sn{3}}
      \end{picture}}

\end{picture}
\qquad
\begin{picture}(30,80)
   \thicklines
   \put(0,4){\vector(1,0){30}}\put(10,8){$\tau$}

   \put(0, 42){\Color{1 0 1 .3}{\vector(1,0){30}}}
\end{picture}
\qquad
\begin{picture}(65,80)
   \thicklines
   \put(8,4){\small binary trees $\y_\bb$}

   \put(15,30){%
      \includegraphics{3412.d.eps}}

\end{picture}
\qquad
\begin{picture}(30,80)
   \thicklines
   \put(0,4){\vector(1,0){30}}\put(10,8){$\kappa$}

   \put(0, 42){\Color{1 0 1 .3}{\vector(1,0){30}}}
\end{picture}
\qquad
\begin{picture}(65,80)
   \thicklines
   \put(5,4){\small left combs $\c_\bb$}
   \put(5,30){\includegraphics{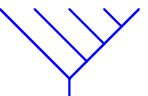}}
\end{picture}
\]

\demph{Splitting} an ordered tree $w$ along the path from a leaf to
the root yields an ordered forest (where the nodes in the forest are
totally ordered) or a pair of ordered trees,
\[
  \begin{picture}(52,42)
   \put(0,0){\includegraphics{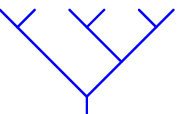}}
   \put(0,31){\sn{2}} \put(10,31){\sn{5}} \put(20,31){\sn{1}}
   \put(30,31){\sn{4}}\put(40,31){\sn{3}}
   \put(30,44){\vector(0,-1){10}}
  \end{picture}
   \ \raisebox{12pt}{\large$\leadsto$}\
  \begin{picture}(52,42)
   \put(0,0){\includegraphics{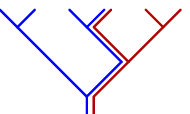}}
   \put(0,31){\sn{2}} \put(10,31){\sn{5}} \put(20,31){\sn{1}}
   \put(32,31){\sn{4}}\put(42,31){\sn{3}}
   \put(31,44){\vector(0,-1){10}}
  \end{picture}
   \ \raisebox{12pt}{$\xrightarrow{\ \curlyvee\ }$}\quad
 \raisebox{7.5pt}{$\displaystyle
   \raisebox{8pt}{$\biggl(\;$}
   \begin{picture}(32,27)
    \put(0,0){\includegraphics{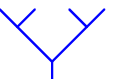}}
    \put(0,21){\sn{2}} \put(10,21){\sn{5}} \put(20,21){\sn{1}}
   \end{picture}
     ,\
   \begin{picture}(22,23)
    \put(0,0){\includegraphics{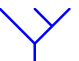}}
    \put(0,16){\sn{4}} \put(10,16){\sn{3}}
   \end{picture}
   \raisebox{8pt}{$\;\biggr)$}
   \quad \raisebox{5.5pt}{\mbox{or}}\quad
   \raisebox{8pt}{$\biggl(\;$}
   \begin{picture}(32,27)
    \put(0,0){\includegraphics{23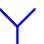}}
    \put(0,21){\sn{2}} \put(10,21){\sn{3}} \put(20,21){\sn{1}}
   \end{picture}
     ,\
   \begin{picture}(22,23)
    \put(0,0){\includegraphics{21.d.eps}}
    \put(0,16){\sn{2}} \put(10,16){\sn{1}}
   \end{picture}
   \raisebox{8pt}{$\;\biggr)$}.
    $}
\]
Write $w\psplit(w_0,w_1)$ when the ordered forest $(w_0,w_1)$ (or
pair of ordered trees) is obtained by splitting $w$. (Context will
determine how to interpret the result.)

We may \demph{graft} an ordered forest $\vec w = (w_0,\dotsc,w_n)$
onto an ordered tree $v\in\s_n$, obtaining the tree $\vec w/v$ as
follows. First increase each label of $v$ so that its nodes are
greater than the nodes of $\vec w$, and then graft tree $w_i$ onto
the $i^{\mathrm{th}}$ leaf of $v$. For example,
\begin{align*}
\hbox{if } (\vec w, v)  &\ =\
  \raisebox{0pt}{$\displaystyle
   \raisebox{8pt}{$\Biggl( \biggl(\;$}
       \begin{picture}(22,23)
          \put(0,0){\includegraphics{21.d.eps}}
          \put(0,16){\sn{3}} \put(10,16){\sn{2}}
       \end{picture}
         , \
         \includegraphics{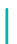}
        \ ,
       \begin{picture}(32,25)
          \put(0,0){\includegraphics{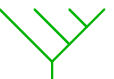}}
              \put(0,21){\sn{7}} \put(10,21){\sn{5}}\put(20,21){\sn{1}}
       \end{picture}
         , \
       \begin{picture}(12,18)
           \put(0,0){\includegraphics{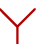}}
           \put(0,11){\sn{6}}
       \end{picture}
        \ , \
       \begin{picture}(12,18)
           \put(0,0){\includegraphics{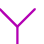}}
           \put(0,11){\sn{4}}
       \end{picture}
   \raisebox{8pt}{$\;\biggr)$} ,
       \begin{picture}(42,28)(-2,0)
              \put(0,0){\includegraphics{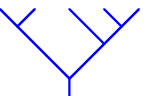}}
              \put(0,27){\sn{1}} \put(10,27){\sn{4}} \put(20,27){\sn{3}} \put(30,27){\sn{2}}
            \end{picture}
   \raisebox{8pt}{$\;\Biggr)$},
   $}
\\[1.5ex]
\raisebox{30pt}{then $\vec w/v$} & \ \  \raisebox{30pt}{$=$} \ \
  \begin{picture}(125,88)
    \put(0,0){\includegraphics{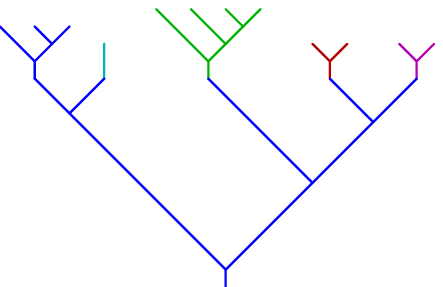}}
    \put(0,76){\sn{3}} \put(10,76){\sn{2}} \put(15,61){\sn{8}} \put(36,61){\sn{11}}
    \put(45,81){\sn{7}} \put(55,81){\sn{5}} \put(65,81){\sn{1}} \put(71,61){\sn{10}}
    \put(90,71){\sn{6}} \put(103,61){\sn{9}} \put(115,71){\sn{4}}
  \end{picture}
  \quad\raisebox{30pt}{$=$}\quad
  \begin{picture}(120,73)
    \put(0,0){\includegraphics{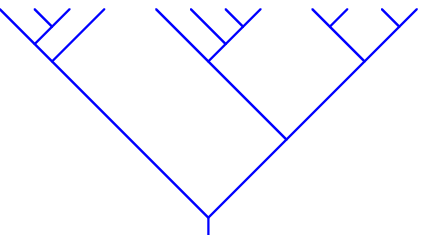}}
    \put(0,66){\sn{3}} \put(10,66){\sn{2}} \put(20,66){\sn{8}} \put(30,66){\sn{11}}
    \put(45,66){\sn{7}} \put(55,66){\sn{5}} \put(65,66){\sn{1}} \put(75,66){\sn{10}}
    \put(90,66){\sn{6}} \put(101,66){\sn{9}} \put(110,66){\sn{4}}
  \end{picture}
\raisebox{30pt}{.}
\end{align*}

Splitting and grafting make sense for trees in $\y_\bb$. They also
work for $\c_\bb$ if, after grafting a forest of combs onto the
leaves of a comb, one appliees $\kappa$ to the resulting planar
binary tree to get a new comb.

\subsubsection{Three cofree Hopf algebras}\label{sec: cofree examples}

Let $\ssym := \bigoplus_{n\geq0} \ssym_n$ be the graded vector space
whose $n^{\mathrm{th}}$ graded piece has basis $\{F_w \mid w \in
\s_n\}$. Define $\ysym$ and $\csym$ similarly. The set maps $\tau$
and $\kappa$ induce vector space maps $\btau$ and $\bkappa$,
$\btau(F_w) = F_{\tau(w)}$ and $\bkappa(F_t)=F_{\kappa(t)}$. Fix
$\mathfrak X \in \{\s, \y, \c\}$. For $w\in \mathfrak X_\bb$ and
$v\in \mathfrak X_n$, set\vspace{-1pt}
 \begin{align*} 
    F_w \cdot F_v\ &:=\ \sum_{w\psplit(w_0,\dotsc,w_n)} F_{(w_0,\dotsc,w_n)/v} \,,\vspace{-1pt}
  \intertext{the sum over all ordered forests obtained by splitting $w$ at a multiset of $n$
  leaves. For $w\in\mathfrak{X}_\bb$, set\vspace{-1pt} }
    \Delta(F_w) \ &:= \ \sum_{w\psplit(w_0,w_1)} F_{w_0} \otimes F_{w_1} \,,\vspace{-1pt}
 \end{align*}
the sum over all splittings of $w$ at one leaf. The counit
$\varepsilon$ is the projection onto the $0^\mathrm{th}$ graded
piece, spanned by the unit element $1=F_{\;\includegraphics{0.eps}}$
for the multiplication.

\begin{proposition}\label{thm: Hopf examples}
For $(\Delta,\cdot,\varepsilon)$ above,
 $\ssym$ is the Malvenuto--Reutenauer Hopf algebra of permutations,
 $\ysym$ is the Loday--Ronco Hopf algebra of planar binary trees, and
 $\csym$ is the divided power Hopf algebra.
Moreover, $\ssym\xrightarrow{\,\btau\,}\ysym$ and
$\ysym\xrightarrow{\,\bkappa\,}\csym$ are surjective Hopf algebra
maps. \hfill \qed
\end{proposition}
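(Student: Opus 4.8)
The plan is to translate the splitting--grafting descriptions of $(\Delta,\cdot,\varepsilon)$ into the classical descriptions of the three Hopf algebras, and then to read off that $\btau$ and $\bkappa$ are morphisms because splitting and grafting are operations on \emph{tree shapes}. I would start with $\ssym$. Recall the bijection $w\mapsto w(t)$ between permutations in $S_n$ and ordered trees in $\s_n$: reading a permutation off the gaps between the leaves of its underlying binary tree $t$ records a linear extension of the node poset of $t$. Under this bijection I would verify that splitting $w(t)$ at a single leaf and standardizing the node labels of the two resulting ordered trees is exactly deconcatenation of the word $w$ into $\mathrm{std}(w_1\cdots w_i)\otimes\mathrm{std}(w_{i+1}\cdots w_n)$, which identifies $\Delta$ with the Malvenuto--Reutenauer coproduct; and that splitting $w(t)$ at a multiset of $n$ leaves, relabelling, and grafting the resulting forest onto $v\in\s_n$ (whose nodes are raised above those of the forest) produces precisely the permutations of the shifted shuffle of $u$ with the word obtained from $v$ by adding $|u|$ to every entry, counted with the correct multiplicities, which identifies $\cdot$ with the Malvenuto--Reutenauer product. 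With $\varepsilon$ the projection onto degree $0$ and $1$ the degree-$0$ generator, this gives that $\ssym$ is the Malvenuto--Reutenauer Hopf algebra~\cite{MalReu:1995}; in particular it is a Hopf algebra.

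Next, $\btau$ and $\bkappa$. Here I would exploit that splitting and grafting are defined on planar binary trees, the node labels of an ordered tree being merely carried along, and that $\kappa$ acts on the underlying tree (with the re-combing proviso of Section~\ref{sec: trees} after a graft). Consequently, for every multiset $S$ of leaves of $w$ one has the identity $\tau(\hbox{split of }w\hbox{ at }S)=(\hbox{split of }\tau(w)\hbox{ at }S)$, and grafting commutes with $\tau$ in the same way; likewise for $\kappa$. Summing over leaf-multisets then yields $\btau(F_w\cdot F_v)=\btau(F_w)\cdot\btau(F_v)$ and $(\btau\otimes\btau)\circ\Delta=\Delta\circ\btau$, so $\btau$ is a bialgebra map, and similarly $\bkappa$. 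Both are surjective because $\tau$ and $\kappa$ are surjective on the indexing sets. Since $\ssym$, $\ysym$, $\csym$ are connected graded bialgebras they are Hopf algebras by~\cite{Tak:71}, and a bialgebra map between connected graded Hopf algebras automatically intertwines the antipodes; hence $\btau$ and $\bkappa$ are Hopf algebra maps.

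It remains to identify $\ysym$ and $\csym$. For $\ysym$ I would note that $\ker\btau$ is spanned in each degree by the differences $F_w-F_{w'}$ with $\tau(w)=\tau(w')$, that is, by the sylvester classes; the quotient of the Malvenuto--Reutenauer Hopf algebra by this biideal is the Loday--Ronco Hopf algebra of planar binary trees (\cite{LodRon:1998}, see also~\cite{FLS:1}), so $\ysym$ is the Loday--Ronco Hopf algebra --- alternatively, the operations induced on $\ysym$ are visibly Loday and Ronco's grafting and splitting of binary trees. For $\csym$, $\dim\csym_n=1$ since $|\c_n|=1$; writing $F_n$ for the basis element of $\csym_n$, splitting the $n$-node comb at one of its $n{+}1$ leaves yields $\Delta(F_n)=\sum_{i+j=n}F_i\otimes F_j$, while there are $\binom{p+q}{q}$ ways to choose a multiset of $q$ of the $p{+}1$ leaves of the $p$-node comb, each producing $F_{p+q}$ after grafting onto the comb in $\c_q$ and re-combing, so $F_p\cdot F_q=\binom{p+q}{p}F_{p+q}$. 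These are exactly the structure constants of the divided power Hopf algebra.

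The step I expect to be the real work is the dictionary for $\ssym$ --- in particular, checking that ``split at a multiset of leaves, relabel, graft'' reproduces the shifted shuffle product with the correct multiplicities, together with its coproduct counterpart. Once that is in hand, the statements about $\btau$ and $\bkappa$ reduce to naturality of the tree operations, the identification of $\ysym$ reduces to recognizing the sylvester congruence, and the identification of $\csym$ is a short binomial count.
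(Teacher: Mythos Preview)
Your sketch is sound, but you should know that the paper does not actually prove this proposition: it carries a \qed\ at the end of the statement, and the text immediately following says only that the assertions about $\ssym$ and $\ysym$ are found in \cite{AguSot:2005,AguSot:2006}, while the identification of $\csym$ with the divided power Hopf algebra is ``straightforward and left to the reader'' (with the explicit isomorphism $x^{(n)}\mapsto F_{c_n}$ recorded afterward). So there is no in-paper argument to compare against; your proposal is really a reconstruction of what those references contain, together with the short computation the paper omits.

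On content, your outline matches what is in the Aguiar--Sottile papers: the splitting/grafting description of $\ssym$ is translated there into the standard Malvenuto--Reutenauer product and coproduct, and $\btau$ is shown to be a Hopf map precisely because the operations are defined on underlying tree shapes. Your binomial count for $\csym$ is exactly the ``straightforward'' verification the paper invites. One small wording point: in the product on $\ssym$ you speak of ``relabelling'' the forest, but in the paper's conventions the ordered forest retains the total order inherited from $w$ and it is only $v$ whose labels are shifted upward; the standardization step happens only when one splits into a \emph{pair} of ordered trees for the coproduct. This does not affect the correctness of your plan, but it is worth keeping the two interpretations of a splitting (ordered forest versus pair of standardized trees) separate when you write out the dictionary.
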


The part of the proposition involving $\ssym$ and  $\ysym$ is found
in~(\cite{AguSot:2005,AguSot:2006}); the part involving $\csym$ is
straightforward and we leave it to the reader.

  Typically~\cite[Ex~5.6.8]{Mont:1993}, the divided power Hopf algebra is defined to be
 $\K[x] := \mathrm{span}\{x^{(n)} \mid n\geq0\}$,
 with basis vectors $x^{(n)}$ satisfying
  $x^{(m)}\cdot x^{(n)} = \binom{m+n}{n} x^{(m+n)}$,
  $1=x^{(0)}$, $\Delta(x^{(n)}) = \sum_{i+j=n} x^{(i)} \otimes x^{(j)}$,
 and $\varepsilon(x^{(n)})=0$ for $n>0$.
 An  isomorphism between $\K[x]$ and $\csym$ is given by
  $x^{(n)}\mapsto F_{c_n}$, where $c_n$ is the
 unique comb in $\c_n$.


\begin{proposition}\label{thm: cofree examples}
 The Hopf algebras $\ssym$, $\ysym$, and $\csym$ are cofree as coalgebras.
 The primitive elements of $\ysym$ and $\csym$ are indexed by trees with no nodes
 off the right branch from the root.
\hfill \qed
\end{proposition}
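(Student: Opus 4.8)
\emph{Proof proposal.}
The plan is to treat the three coalgebras separately: for $\ssym$ and $\ysym$ I would deduce cofreeness from a duality criterion together with classical structural facts, for $\csym$ I would compute directly, and in each case I would then read off the primitives. The criterion I would use is this: over $\K$, a connected graded coalgebra $\calC=\bigoplus_{n\ge0}\calC_n$ with finite-dimensional graded pieces is cofree (that is, $\calC\simeq\Cofree(P_\calC)$) if and only if its graded dual $\calC^*$ is a free associative algebra, and then $P_\calC$ is the graded dual of the indecomposables $\calC^*_+/(\calC^*_+)^2$. This is immediate from the graded finite-type identity $\Cofree(V)^*\simeq\bigoplus_{n\ge0}(V^*)^{\otimes n}$ with concatenation product, which is the free associative algebra on $V^*$ and whose product is dual to deconcatenation; the primitives of $\Cofree(V^*)$ are exactly the degree-one tensors $V^*$.

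For $\ssym$, the Malvenuto--Reutenauer Hopf algebra is self-dual and is free as an associative algebra, so the criterion gives that $\ssym$ is cofree. Likewise the Loday--Ronco Hopf algebra $\ysym$ is self-dual and free as an associative algebra, so $\ysym$ is cofree; these facts, and the combinatorial description of the Loday--Ronco free generators, are precisely the ingredients already invoked in Proposition~\ref{thm: Hopf examples} via \cite{AguSot:2005,AguSot:2006} (where the cofreeness is in fact proved by producing a ``monomial'' basis in which the coproduct becomes deconcatenation). Dualizing the free generating set then yields a basis of $P_\ysym$: in degree $n$ the Loday--Ronco generators are the $C_{n-1}$ binary trees whose left subtree at the root is trivial, i.e.\ the trees with no node off the right branch from the root, which is exactly the description in the statement. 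As a consistency check, cofreeness forces the Hilbert series $H(t)=\sum_n\dim(\ysym_n)\,t^n=\sum_n C_n t^n$ to satisfy $H=1/(1-tH)$, namely the Catalan equation $H=1+tH^2$ (and likewise for $\ssym$ with indecomposable permutations).

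For $\csym$ I would argue by hand. In the basis $\{F_{c_n}\}$ the coproduct is $\Delta(F_{c_n})=\sum_{i+j=n}F_{c_i}\otimes F_{c_j}$ --- this is exactly the content of the isomorphism $\csym\simeq\K[x]$ of the Remark above under $F_{c_n}\leftrightarrow x^{(n)}$ --- which is literally deconcatenation, so $F_{c_n}\mapsto F_{c_1}^{\backslash n}$ is a coalgebra isomorphism $\csym\simeq\Cofree(\K F_{c_1})$. Hence $\csym$ is cofree with one-dimensional space of primitives $\K F_{c_1}$, and in positive degree $c_1$ is the only comb with no node off the right branch from the root, matching the statement. (Equivalently, $\csym^*\simeq\K[y]$ is free on one generator.) Assembling the three cases proves the Proposition.

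The step I expect to be the main obstacle is the verification of the classical inputs for $\ssym$ and $\ysym$ --- self-duality, and above all freeness as associative algebras --- together with the identification of the Loday--Ronco free generators with the right-branch-trivial trees; granting these (all contained in \cite{AguSot:2005,AguSot:2006}), the duality criterion, the direct $\csym$ computation, and the Hilbert-series bookkeeping are routine.
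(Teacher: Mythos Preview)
Your argument is correct for cofreeness and takes a genuinely different route from the paper's. The paper simply invokes the Aguiar--Sottile construction directly: one passes from the fundamental basis $F_w$ to the monomial basis $M_w$ by M\"obius inversion in the weak/Tamari order, and in the $M$-basis the coproduct becomes deconcatenation along global descents, so cofreeness and the indexing of primitives are read off simultaneously. You instead invoke the duality criterion (cofree $\Longleftrightarrow$ graded dual is free) together with self-duality and freeness of $\ssym$ and $\ysym$. This is a clean and perfectly valid way to obtain cofreeness, and your Hilbert-series check $H=1/(1-tH)$ correctly recovers $\dim P_n=C_{n-1}$.

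The one soft spot is the step ``dualizing the free generating set then yields a basis of $P_\ysym$ indexed by trees with trivial left subtree.'' Your duality argument gives the \emph{dimension} of $P_\ysym$ in each degree, but not, by itself, a canonical basis labeled by those particular trees: the self-duality of $\ysym$ is not the identity on fundamental basis elements (it involves reflecting trees), so the indexing set for free generators need not literally coincide with the indexing set for primitives without tracking the isomorphism, and in any case the primitives are not $F_t$'s but $M_t$'s. To get the precise statement ``primitives are indexed by trees with no nodes off the right branch from the root'' one ends up needing exactly the $M$-basis description you mention parenthetically. So your approach buys a more conceptual proof of cofreeness, while the paper's (i.e., Aguiar--Sottile's) approach buys the explicit combinatorial labeling of primitives in one stroke. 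For $\csym$ the two approaches agree.
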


The result for $\csym$ is easy. Proposition~\ref{thm: cofree
examples} is proven for $\ssym$ and $\ysym$
in~(\cite{AguSot:2005,AguSot:2006}) by performing a change of
basis---from the \demph{fundamental basis} $F_w$ to the
\demph{monomial basis} $M_w$---by means of M\"obius inversion in a
poset structure placed on $\s_\bb$ and $\y_\bb$.

\section{Constructing Cofree Compositions of Coalgebras}\label{sec: cccc}

\subsection{Cofree compositions of coalgebras}\label{sec: cccc main results}

Let $\calC$ and $\calD$ be graded coalgebras. Form a new coalgebra
$\calE=\calD\circ\calC$ on the vector space
 \begin{gather}\label{eq: E_(n)}
  \calD\circ\calC \ := \ \bigoplus_{n\geq0} \calD_n \otimes \calC^{\otimes(n+1)} \,.
 \end{gather}
When $\calC$ and $\calD$ are spaces of rooted, planar trees we may
interpret $\circ$ in terms of a rule for grafting trees.

\begin{example}\label{ex: painted}
 Suppose $\calC=\calD=\ysym$ and let
 $d\times (c_0,\dotsc,c_n) \in \Red{\y_n} \times \bigl(\Blue{\y_\bb}^{{n+1}}\bigr)$.
 Define $\circ$ by attaching the forest $(c_0,\dots,c_n)$ to the leaves of $d$
 while remembering $d$,
 \[
   \raisebox{0pt}{$\displaystyle
      \includegraphics{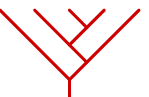}
\raisebox{2pt}{\ \ $\times$\ }
    \raisebox{8pt}{$\biggl(\;$}
      \includegraphics{12.d.eps}
     , \
      \includegraphics{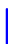}
      \ ,
      \includegraphics{231.d.eps}
     , \
      \includegraphics{1.d.eps}
         \ , \
      \includegraphics{0.d.eps}
   \raisebox{8pt}{$\;\biggr)$}
   $} \raisebox{3pt}{\ \ \ $\stackrel{\textstyle\circ}{\longmapsto}$\ }
   \raisebox{-18pt}{\includegraphics{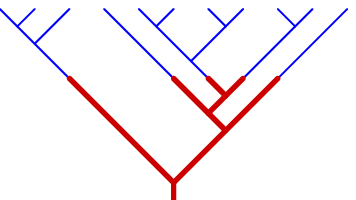}}
 \]
\end{example}

We represent an indecomposable tensor in $\calE:=\calD\circ\calC$ as
\[
    \composeinline{c_0}{c_n}{d} \qquad\hbox{or}\qquad \compose{c_0}{c_n}{d}\ .
\]
The \demph{degree} of such an element is $|d|+|c_0|+\dotsb+|c_n|$.
Write $\calE_n$ for the span of elements of degree $n$.

\subsubsection{The coalgebra $\calD\circ\calC$}
We define the \demph{compositional coproduct} $\Delta$ for
$\calD\circ\calC$ on indecomposable tensors: if $|d|=n$,
put\vspace{-2pt}
 \begin{equation}\label{eq: delta}
    \Delta\left(\compose{c_0}{c_n}{d}\right)\ =\
      \sum_{i=0}^n \sum_{\substack{(d) \\ |d'|=i}}
    \sum_{(c_i)} \rcompose{c_0}{c_{i-1}}{c'_i}{d'} \otimes \lcompose{c''_i}{c_{i+1}}{c_n}{d''} \,.\vspace{-2pt}
 \end{equation}

The \demph{counit} $\varepsilon:\calD\circ\calC \to \K$ is given by
$\varepsilon(\composeinline{c_0}{c_n}{d})=
  \varepsilon_\calD(d) \cdot \prod_j \varepsilon_\calC(c_j) $.

For the painted trees of Example~\ref{ex: painted}, if the $c_i$ and
$d$ are elements of the $F$-basis, then
 $\Delta ({\composeinline{c_0}{c_n}{d}})$ is the sum over all splittings $t\psplit(t',t'')$ of
 $t$ into a pair of painted trees.

\begin{theorem}
  $(\calD\circ\calC,\Delta,\varepsilon)$ is  a coalgebra.
  This composition is functorial, if $\varphi\colon\calC\to\calC'$ and
  $\psi\colon\calD\to\calD'$ are morphisms of graded coalgebras, then\vspace{-3pt}
\[
    \compose{c_0}{c_n}{d}\ \longmapsto\
    \compose{\varphi(c_0)}{\varphi(c_n)}{\psi(d)}\vspace{-3pt}
\]
  defines a morphism of graded coalgebras
  $\varphi\circ\psi\colon \calD\circ\calC\to \calD'\circ\calC'$.
\end{theorem}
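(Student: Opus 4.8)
The plan is to verify the two assertions of the theorem separately: first that $(\calD\circ\calC,\Delta,\varepsilon)$ is a coalgebra, then that the stated assignment $\varphi\circ\psi$ is a morphism of graded coalgebras.

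\medskip

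\noindent\textbf{Coassociativity and counit.} First I would check the counit axiom, which is a direct unwinding: applying $\varepsilon$ to the left tensor factor of $\Delta(\composeinline{c_0}{c_n}{d})$ and using the counit axioms for $\calC$ and $\calD$, the only surviving terms are those with $i=0$ and $d'$ in degree $0$, and $\sum_{(c_0)}\varepsilon(c_0')c_0'' = c_0$ collapses the sum back to $\composeinline{c_0}{c_n}{d}$; the other side is symmetric. For coassociativity, I would apply $(\Delta\otimes\id)\Delta$ and $(\id\otimes\Delta)\Delta$ to a generic indecomposable tensor and show both equal a common ``triple'' coproduct
\[
  \sum_{0\le i\le j\le n}\ \sum_{\substack{(d)\\|d'|=i,\ |d''_{(1)}|=j-i}}\ \sum_{(c_i),(c_j)}
   \rcompose{c_0}{c_{i-1}}{c'_i}{d'}\otimes \lrcompose{c''_i}{c_{i+1}}{c_{j-1}}{c'_j}{d''_{(1)}}\otimes \lcompose{c''_j}{c_{j+1}}{c_n}{d''_{(2)}},
\]
where I have written the iterated coproduct of $d$ as $\sum d'\otimes d''_{(1)}\otimes d''_{(2)}$ and must be careful about the degenerate cases $i=j$ (where a single $c_i$ is split twice, and coassociativity of $\Delta_\calC$ is used) versus $i<j$ (where two distinct slots are split once each). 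The bookkeeping here is the main obstacle: one must track how the outer index range ($0\le i\le n$ on $d'$) interacts with the inner index range on the two pieces $d''$ or $d'$ produced at the first step, and confirm the ranges match up exactly. This reduces, after reindexing, to coassociativity of $\Delta_\calD$ together with coassociativity of $\Delta_\calC$ in the single ``double-split'' slot; I expect no genuine difficulty beyond careful indexing, since formula~\eqref{eq: delta} is modeled on deconcatenation in the $\calC$-slots and on $\Delta_\calD$ in the $\calD$-slot, both of which are coassociative.

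\medskip

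\noindent\textbf{Grading.} I would note that $\Delta$ is homogeneous of degree $0$: in each summand $|d'|+|d''| = |d|$ and $|c_i'|+|c_i''| = |c_i|$ (since $\Delta_\calC,\Delta_\calD$ are graded), so the degrees of the two output tensors sum to $|d|+\sum_j|c_j|$, the degree of the input. Hence $\Delta(\calE_m)\subseteq\bigoplus_{p+q=m}\calE_p\otimes\calE_q$, and $\varepsilon$ kills $\calE_m$ for $m>0$.

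\medskip

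\noindent\textbf{Functoriality.} For the second assertion, write $\Phi:=\varphi\circ\psi\colon\calD\circ\calC\to\calD'\circ\calC'$ for the stated linear map. It is visibly graded (degrees are preserved slot-by-slot since $\varphi,\psi$ are graded). That $\Phi$ is a coalgebra map is then a formal consequence of the shape of~\eqref{eq: delta}: applying $\Delta_{\calD'\circ\calC'}\circ\Phi$ to $\composeinline{c_0}{c_n}{d}$ produces the sum over $i$ and over $(\psi(d))$ with $|(\psi(d))'|=i$ and over $(\varphi(c_i))$ of the evident tensors, while $(\Phi\otimes\Phi)\circ\Delta_{\calD\circ\calC}$ produces the sum over $i$, over $(d)$ with $|d'|=i$, over $(c_i)$, of $\Phi$ applied to each factor. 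These agree because $\psi$ being a coalgebra map gives $\Delta_{\calD'}(\psi(d)) = (\psi\otimes\psi)\Delta_\calD(d)$ with $\psi$ graded (so the constraint $|d'|=i$ transports correctly), and likewise $\Delta_{\calC'}(\varphi(c_i)) = (\varphi\otimes\varphi)\Delta_\calC(c_i)$; the remaining slots $c_0,\dots,\widehat{c_i},\dots,c_n$ are simply hit by $\varphi$ on both sides. Compatibility with counits, $\varepsilon_{\calD'\circ\calC'}\circ\Phi = \varepsilon_{\calD\circ\calC}$, follows from $\varepsilon_{\calD'}\circ\psi=\varepsilon_\calD$ and $\varepsilon_{\calC'}\circ\varphi=\varepsilon_\calC$ applied factorwise. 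I expect this half to be routine once the first half is in place, the only thing to be careful about being that $\varphi,\psi$ must be \emph{graded} coalgebra maps for the degree constraint $|d'|=i$ to be respected --- which is part of the hypothesis.
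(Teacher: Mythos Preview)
The paper is an extended abstract and states this theorem without proof, so there is nothing to compare against. Your proposal is a correct direct verification and is the natural approach: reduce coassociativity and the counit axiom for $\calD\circ\calC$ to those for $\calC$ and $\calD$ via the explicit formula~\eqref{eq: delta}, and deduce functoriality from the fact that the formula is built slotwise from $\Delta_\calC$, $\Delta_\calD$, and identity maps. Your remark that the degree constraint $|d'|=i$ requires $\psi$ to be \emph{graded} (not merely a coalgebra map) is exactly the point where the hypothesis is used, and your separation of the $i=j$ case (a single $c_i$ split twice, invoking coassociativity of $\Delta_\calC$) from the $i<j$ case is the only genuine bookkeeping hazard; both are handled correctly.
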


\subsubsection{The cofree coalgebra $\calD\circ\calC$}
Suppose that $\calC$ and $\calD$ are graded, connected, and cofree.
Then $\calC=\Cofree(P_\calC)$, where $P_\calC\subset\calC$ is its
space of primitive elements. Likewise, $\calD=\Cofree(P_\calD)$,
where $P_\calD\subset\calD$ is its space of primitive elements.

\begin{theorem}\label{thm: cc cofree}
 If\/ $\calC$ and $\calD$ are cofree coalgebras  then $\calD\circ\calC$ is also a cofree
 coalgebra.
 Its space of  primitive  elements is spanned by indecomposible tensors of the form\vspace{-2pt}
\begin{equation}\label{eq: primitives}
    \lrcompose{1}{c_1}{c_{n-1}}{1}{\delta} \qquad\hbox{ and }\qquad
    \zcompose{\,\gamma\,}{1} \,,\vspace{-2pt}
\end{equation}
 where $\gamma,c_i\in\calC$ and $\delta\in\calD_n$ with $\gamma,\delta$ primitive.
\end{theorem}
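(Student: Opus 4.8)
The plan is to prove both statements at once by exhibiting an explicit coalgebra isomorphism $\calD\circ\calC\simeq\Cofree(W)$, where $W$ is the span of the tensors displayed in~\eqref{eq: primitives}, and then checking that $W$ is exactly the primitive space. Since $\calC=\Cofree(P_\calC)$ and $\calD=\Cofree(P_\calD)$, I would start from the concrete models $\calC_n=\bigoplus P_\calC^{\,\backslash m}$ and $\calD_n=\bigoplus P_\calD^{\,\backslash m}$ with deconcatenation coproducts, so that a general indecomposable tensor $\compose{c_0}{c_n}{d}$ is a string: a word $d=\delta_1\backslash\dots\backslash\delta_k$ in $\calD$, decorated at its $k+1$ ``gaps'' by words $c_0,\dots,c_k$ (after reindexing $n\mapsto k$ whenever a $\delta_j$ has degree $>1$ we must be careful, but the combinatorics is that of gaps among the primitive letters of $d$). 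The compositional coproduct~\eqref{eq: delta}, read in this model, splits such a string at one of its ``cut points'': either between two consecutive primitive letters of $d$ (splitting the corresponding gap-word $c_i$ by deconcatenation as $c_i'\backslash c_i''$), or inside one of the end gaps $c_0$ or $c_n$. So $\Delta$ on $\calD\circ\calC$ is literally a deconcatenation coproduct on a suitable set of ``letters.''

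Concretely, I would identify the set of letters: a letter is either (i) a primitive $\gamma\in P_\calC$ sitting alone, i.e. $\zcompose{\gamma}{1}$, or (ii) a primitive $\delta\in P_\calD$ together with the word $c$ decorating the single gap to its left, packaged so that consecutive (ii)-letters chain up and the rightmost gap is closed off by $1$; the normal form $\lrcompose{1}{c_1}{c_{n-1}}{1}{\delta}$ in~\eqref{eq: primitives} is exactly what a maximal such block of (ii)-letters looks like when its endpoints are degenerate. The key computation is to verify that under this bookkeeping the map sending $\compose{c_0}{c_n}{d}$ to the corresponding word in these letters intertwines $\Delta$ from~\eqref{eq: delta} with deconcatenation; this is a direct but slightly fussy unwinding of~\eqref{eq: delta}, using coassociativity of $\Delta_\calC$ and $\Delta_\calD$ and the fact that the degree condition $|d'|=i$ picks out precisely a prefix of the primitive letters of $d$. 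Connectedness of $\calC$ and $\calD$ guarantees $(\calD\circ\calC)_0=\K$, so $\calD\circ\calC$ is graded connected, and a graded connected deconcatenation coalgebra is cofree on its letters.

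It then remains to check that the letters really span the primitive space, i.e. that $W$ as described equals $P_{\calD\circ\calC}$. One inclusion is immediate: each displayed element has $\Delta(x)=x\otimes 1+1\otimes x$ by a one-line check in~\eqref{eq: delta} (using that $\gamma$, $\delta$ are primitive and the intermediate gaps are carried entirely to one side). For the reverse inclusion I would invoke the general fact used implicitly in Proposition~\ref{thm: cofree examples}: in a cofree (equivalently, graded connected deconcatenation) coalgebra the primitives are exactly the span of the single letters, so once the isomorphism with $\Cofree(W)$ is established the identification $P_{\calD\circ\calC}=W$ is automatic. I expect the main obstacle to be purely organizational: setting up notation for ``the word $d$ together with its gap-decorations'' cleanly enough that~\eqref{eq: delta} visibly becomes deconcatenation, and in particular handling primitives $\delta\in\calD$ of degree $>1$ (so that $|d|=n$ but $d$ has fewer than $n$ primitive letters) without the indices getting away from us. Once the right normal form for elements of $\calD\circ\calC$ is in place, every step is a short verification.
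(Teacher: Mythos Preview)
The paper is an extended abstract and does not include a proof of Theorem~\ref{thm: cc cofree}, so there is nothing to compare your proposal against directly. Your overall strategy---model $\calC=\Cofree(P_\calC)$ and $\calD=\Cofree(P_\calD)$ concretely and exhibit $\calD\circ\calC$ as a deconcatenation coalgebra on the space $W$ spanned by the tensors in~\eqref{eq: primitives}---is the natural one, and your checks that those tensors are primitive and that cofreeness then forces $P_{\calD\circ\calC}=W$ are correct.

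There is, however, a genuine error in your description of the letter set, not just an organizational one. You assert that ``the combinatorics is that of gaps among the primitive letters of $d$'' and describe a type-(ii) letter as a primitive $\delta\in P_\calD$ together with a single $\calC$-word ``decorating the single gap to its left.'' This is wrong whenever $P_\calD$ has elements of degree greater than $1$. If $\delta\in P_\calD\cap\calD_m$ with $m\ge 2$, then in $\calD\circ\calC$ it sits under $m{+}1$ slots $c_0,\dotsc,c_m$, and in the coproduct~\eqref{eq: delta} the only admissible values of $i$ are $i=0$ and $i=m$ (since $\Delta_\calD(\delta)=\delta\otimes 1+1\otimes\delta$). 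Hence the $m{-}1$ \emph{interior} slots $c_1,\dotsc,c_{m-1}$ are inseparably bound to $\delta$: the element $\lrcompose{1}{c_1}{c_{m-1}}{1}{\delta}$ is a \emph{single} indecomposable letter, not ``a maximal block of (ii)-letters.'' The correct picture is: for $d=\delta_1\backslash\dotsb\backslash\delta_k$ with $|\delta_j|=m_j$, the cut points of~\eqref{eq: delta} occur only at the partial sums $p_j=m_1+\dotsb+m_j$; the $\calC$-entries at those positions $c_{p_0},\dotsc,c_{p_k}$ are themselves words in $P_\calC$ and supply the type-(i) letters, while each $\delta_j$ together with its interior entries $c_{p_{j-1}+1},\dotsc,c_{p_j-1}$ is one type-(ii) letter. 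With this corrected bookkeeping, the map $\Cofree(W)\to\calD\circ\calC$ concatenates the $\calD$-parts using the tensor-algebra product on $\Cofree(P_\calD)$ and merges adjacent boundary $\calC$-slots using the tensor-algebra product on $\Cofree(P_\calC)$; the verification that this intertwines deconcatenation with~\eqref{eq: delta} is then the routine unwinding you anticipate.
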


\begin{example}\label{sec: cccc examples}

The graded Hopf algebras of ordered trees $\ssym$, planar trees
$\ysym$, and divided powers $\csym$ are all cofree, and so their
compositions are cofree. We have the surjective Hopf algebra
maps\vspace{-2pt}
\[
   \ssym\ \xrightarrow{\ \btau\ }\ \ysym\ \xrightarrow{\ \bkappa\ }\ \csym\vspace{-2pt}
\]
giving the commutative diagram of Figure~\ref{fig: diamond} of nine
cofree coalgebras as the composition $\circ$ is functorial.
\begin{figure}[!hbt]
\[
  \begin{picture}(270,160)(0,5)
                       \put(105,160){$\ssym\circ\ssym$}
    \put(128,154){\vector(-1,-1){20}}     \put(146,154){\vector(1,-1){20}}
       \put(65,120){$\ssym\circ\ysym$}   \put(145,120){$\ysym\circ\ssym$}
    \put(106,114){\vector(1,-1){20}}     \put(168,114){\vector(-1,-1){20}}
    \put( 88,114){\vector(-1,-1){20}}      \put(186,114){\vector(1,-1){20}}
   \put(25,81){$\ssym\circ\csym$}        \put(185,81){$\csym\circ\ssym$}
                    \put(105,81){\Blue{\underline{$\ysym\circ\ysym$}}}
    \put( 66,74){\vector(1,-1){20}}     \put(128,74){\vector(-1,-1){20}}
    \put(146,74){\vector(1,-1){20}}     \put(208,74){\vector(-1,-1){20}}
      \put(65,41){\Blue{\underline{$\ysym\circ\csym$}}}   \put(145,41){$\csym\circ\ysym$}
    \put(106,34){\vector(1,-1){20}}     \put(168,34){\vector(-1,-1){20}}
                  \put(105,3){\Blue{\underline{$\csym\circ\csym$}}}
  \end{picture}
\]
\caption{A commutative diagram of cofree compositions of
coalgebras.}\label{fig: diamond}
\end{figure}
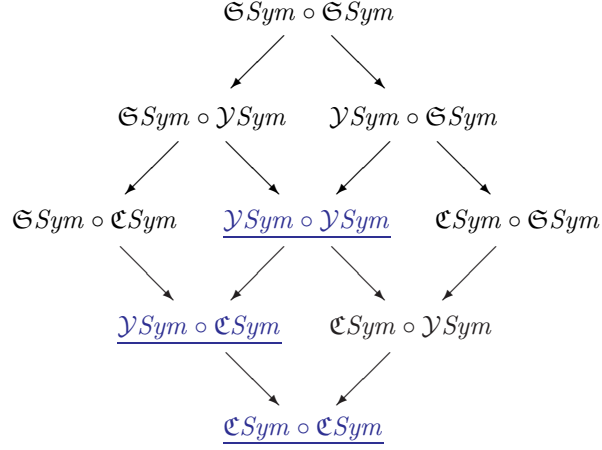
\end{example}
\subsection{Enumeration}
Set $\calE:=\calD\circ\calC$ and let $C_n$ and $E_n$ be the
dimensions of $\calC_n$ and $\calE_n$, respectively.

\begin{theorem}
 When $\calD_n$ has a basis indexed by combs with $n$ internal nodes
 we have the recursion \vspace{-2pt}
 \[
   E_0\ =\ 1\,,\quad\mbox{and for \ $n>0$,}\quad
  E_n\ =\ C_n\ +\ \sum_{i=0}^{n-1}C_i E_{n-i-1}\,.\vspace{-2pt}
 \]
\end{theorem}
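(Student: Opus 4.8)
The plan is to compute $E_n$ by counting a basis of $\calE_n = \bigoplus_{k} \calD_k \otimes \calC^{\otimes(k+1)}$ directly, using the hypothesis that $\dim\calD_k = 1$ for all $k$ (since combs with $k$ internal nodes are unique, exactly as noted for $\csym$ in Section~\ref{sec: Hopf examples}). First I would record the generating-function reformulation: with $C(x) = \sum_{n\geq 0} C_n x^n$ and $E(x) = \sum_{n\geq 0} E_n x^n$, the direct sum decomposition~\eqref{eq: E_(n)} together with $\dim \calD_k = 1$ gives
\[
  E(x)\ =\ \sum_{k\geq 0} x^k\, C(x)^{k+1}\ =\ \frac{C(x)}{1 - x\,C(x)}\,,
\]
where the geometric series converges as a formal power series because $x\,C(x)$ has no constant term. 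Clearing the denominator yields $E(x)(1 - x C(x)) = C(x)$, i.e. $E(x) = C(x) + x\,C(x)\,E(x)$.

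Next I would extract coefficients. The constant term gives $E_0 = C_0 = 1$, using that $\calC$ is connected so $C_0 = 1$ (this is where connectedness of $\calC$, hence $\calE_0 = \calD_0\otimes\calC_0 = \K$, enters). For $n > 0$, reading off the coefficient of $x^n$ in $E(x) = C(x) + x\,C(x)\,E(x)$ gives
\[
  E_n\ =\ C_n\ +\ \sum_{i+j = n-1} C_i E_j\ =\ C_n\ +\ \sum_{i=0}^{n-1} C_i\,E_{n-i-1}\,,
\]
which is exactly the claimed recursion. Alternatively, and perhaps more transparently for the extended abstract, one can argue combinatorially: a basis element of $\calE_n$ is a pair consisting of the unique comb $d$ of some degree $k\le n$ together with a tuple $(c_0,\dots,c_k)$ of $\calC$-basis elements whose degrees sum to $n-k$; splitting off whether $k=0$ (contributing the basis of $\calC_n$, the $C_n$ term) or $k\geq 1$ (peel off $c_0$ of some degree $i$ and observe the rest — the comb of degree $k-1$ wearing the forest $(c_1,\dots,c_k)$ — ranges over a basis of $\calE_{n-i-1}$) produces the same recursion.

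I do not expect a genuine obstacle here; the only thing to be careful about is the bookkeeping in the ``$k\geq 1$'' case of the bijective argument — namely that deleting the first leaf of a comb on $k$ nodes and the tree grafted there really does biject the remaining data with $\bigsqcup_{i} (\text{basis of }\calC_i)\times(\text{basis of }\calE_{n-i-1})$, with $i = |c_0|$ running from $0$ to $n-1$ — and that the degree of $\compose{c_0}{c_k}{d}$ was defined to be $|d| + |c_0| + \dots + |c_k|$, so the degrees add up correctly. Since this is parallel to the generating-function computation, I would present the generating-function derivation as the main proof and mention the combinatorial interpretation as a remark.
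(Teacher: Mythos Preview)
Your proposal is correct. The paper's own proof is precisely your ``alternative'' combinatorial argument: it separates the case $k=0$ (contributing $C_n$) from $k\ge 1$, and in the latter case removes the root node of the comb $d$ to produce the pair $\bigl(c_0,\ \composeinline{c_1}{c_k}{d'}\bigr)$ with $c_0\in\calC_i$ and the second factor in $\calE_{n-i-1}$.

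Your generating-function derivation is a clean repackaging of the same decomposition; it has the virtue of making the degree bookkeeping automatic and of immediately yielding the closed form $E(x)=C(x)/(1-xC(x))$, but it is not a genuinely different idea. Since the paper is an extended abstract and gives only the two-line bijective argument, you could safely present either version; just note that your careful remark about why peeling off $c_0$ bijects the remaining data with a basis of $\calE_{n-i-1}$ is exactly the content of the paper's sentence about ``removing the root node of $d$''.
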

\begin{proof}
 The first term counts elements  in $\calE_n$ of the form
 $\includegraphics{0.eps}\circ c$.
 Removing the root node of $d$ from ${\composeinline{c_0}{c_k}{d}}$ gives a
 pair  $\zcomposeinline{c_0}{\includegraphics{0.eps}}$ and
 $\composeinline{c_1}{c_k}{d'}$ with $c_0\in\calC_i$,
 whose dimensions are enumerated by the terms of the sum.
\end{proof}

For  combs over a comb, $E_n=2^n$, for trees over a comb, $E_n$ are
the Catalan numbers, and for permutations over a comb, we have the
recursion
$$
   E_0\ =\ 1\,,\quad\mbox{and for \ $n>0$,}\quad
   E_n\ =\ n! + \sum_{i=0}^{n-1}i! E_{n-i-1}\,,
$$
 which begins $1, 2, 5, 15, 54, 235,\dotsc$, and is sequence A051295 of the OEIS~(\cite{Slo:oeis}).
Similarly,

\begin{theorem}
When $\calD_n$ has a basis indexed by $\y_n$ then we have the
recursion
 $$
   E_0\ =\ 1\,,\quad\mbox{and for \ $n>0$,}\quad
   E_n\ =\ C_n + \sum_{i=0}^{n-1}E_i E_{n-i-1} \,.
 $$
\end{theorem}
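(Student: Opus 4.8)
The plan is to repeat, almost verbatim, the combinatorial argument used for the preceding theorem, with ``deleting the root node of a comb'' replaced by ``deleting the root node of a planar binary tree.'' Recall from~\eqref{eq: E_(n)} that a basis element of $\calE_n=(\calD\circ\calC)_n$ has the form $\composeinline{c_0}{c_k}{d}$ where $d\in\y_k$, $c_j\in\calC$, and $k+|c_0|+\dots+|c_k|=n$. I would partition $\calE_n$ into the elements for which $d$ is the unique tree of $\y_0$ and the elements for which $|d|\geq 1$, and count the two families separately.

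For the first family, $d\in\y_0$ forces $\composeinline{c_0}{c_k}{d}=\zcomposeinline{c_0}{d}$ with $c_0\in\calC_n$, so this family has $C_n$ elements; in particular $E_0=\dim\calE_0=\dim(\calD_0\otimes\calC_0)=1$, since $\dim\calD_0=|\y_0|=1$ and $\calC$ is connected. For the second family, write the root of $d\in\y_k$ as carrying left subtree $d'\in\y_a$ and right subtree $d''\in\y_b$ with $a+b=k-1$. A binary tree with $a$ internal nodes has $a+1$ leaves, so the root splits the $k+1$ leaves of $d$ into the leaves $0,\dots,a$ of $d'$ and the leaves $a+1,\dots,k$ of $d''$; correspondingly the forest $(c_0,\dots,c_k)$ splits as $(c_0,\dots,c_a)$ and $(c_{a+1},\dots,c_k)$. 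Deleting the root of $d$ therefore sends
\[
  \composeinline{c_0}{c_k}{d}\ \longmapsto\ \bigl(\,\composeinline{c_0}{c_a}{d'}\,,\ \composeinline{c_{a+1}}{c_k}{d''}\,\bigr)\ \in\ \calE\times\calE\,,
\]
which I claim is a bijection onto $\bigsqcup_{i=0}^{n-1}\calE_i\times\calE_{n-1-i}$, with inverse the grafting of two indecomposable tensors below a common new root. The two outputs have degrees $a+|c_0|+\dots+|c_a|$ and $b+|c_{a+1}|+\dots+|c_k|$, summing to $(a+b)+(n-k)=n-1$; writing $i$ for the first degree, the second is $n-1-i$, and $i$ runs over $0,\dots,n-1$. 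Hence the second family has $\sum_{i=0}^{n-1}E_iE_{n-1-i}$ elements, and adding the two counts gives the recursion.

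The only step deserving (routine) care is verifying that this root-deletion map really is a bijection compatible with the forest decomposition --- that the sub-forests attached to the left and right subtrees recombine uniquely under regrafting, and that the two families are disjoint and together exhaust $\calE_n$; this is exactly the argument already used in the companion theorem, now with a two-piece split rather than a ``peel off $c_0$'' split. As an alternative that sidesteps the bijection, one can argue via generating functions: writing $E(x)=\sum_{n\geq0}E_nx^n$, $C(x)=\sum_{n\geq0}C_nx^n$, and $D(y)=\sum_{k\geq0}\dim(\calD_k)\,y^k$, equation~\eqref{eq: E_(n)} gives $E(x)=C(x)\,D\!\bigl(x\,C(x)\bigr)$; when $\dim\calD_k$ is the $k^{\mathrm{th}}$ Catalan number $D$ satisfies $D(y)=1+y\,D(y)^2$, so $E(x)=C(x)+x\,E(x)^2$, and the recursion follows by extracting the coefficient of $x^n$.
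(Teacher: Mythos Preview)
Your proposal is correct and is precisely the argument the paper intends: the paper gives no explicit proof of this theorem, writing only ``Similarly,'' after the comb case, and you have faithfully carried out the implied adaptation---replacing the comb's root-deletion (which peels off $c_0$) with binary-tree root-deletion (which splits $d$ into left and right subtrees, each carrying its share of the forest). Your degree bookkeeping and bijection are right; the generating-function alternative is a clean extra not in the paper.
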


For example, the combs over a tree are enumerated by the binary
transform of the Catalan numbers (\cite{forcey2}). The trees over a
tree are enumerated by the Catalan transform of the Catalan numbers
(\cite{forcey1}). The permutations over a tree are enumerated by the
recursion
$$
   E_0\ =\ 1\,,\quad\mbox{and for \ $n>0$,}\quad
   E_n\ =\ n! + \sum_{i=0}^{n-1}E_i E_{n-i-1}\,,
$$
which begins $1, 2, 6, 22, 92, 428,\dotsc$  and is not a recognized
sequence in the OEIS~(\cite{Slo:oeis}).

\section{Composition of Coalgebras and Hopf Modules}\label{sec: hopf}

We give conditions that imply a composition of coalgebras is a
one-sided Hopf algebra, interpret this via operads, and then
investigate which compositions of Fig.~\ref{fig: diamond} are
one-sided Hopf algebras.

%
\subsection{Module coalgebras}\label{sec: covering}

Let $\calD$ be a connected graded Hopf algebra with product
$m_{\calD}$, coproduct $\Delta_{\calD}$, and unit element
$1_{\calD}$.

 A map  $f: \calE \to \calD$ of graded coalgebras is a
 \demph{connection} on $\calD$ if $\calE$ is a $\calD$--module coalgebra, $f$ is a map of
 $\calD$-module coalgebras, and $\calE$ is connected.
That is, $\calE$ is an associative (left or right) $\calD$-module
whose action (denoted $\star$) commutes with the coproducts, so that
$\Delta_{\calE}(e \star d) = \Delta_{\calE}(e) \star
\Delta_{\calD}(d)$, for  $e\in \calE$ and $d \in \calD$, \emph{and}
the coalgebra map $f$ is also a module map, so that for $e\in\calE$
and $d\in\calD$ we have
\[
  (f\otimes f )\,\Delta_{\calE}(e)\ =\ \Delta_{\calD}\, f(e)
    \qquad\mbox{and}\qquad
  f(e\star d) \ = \ m_{\calD} \,( f(e)\otimes d)\,.
\]

\begin{theorem}\label{cover_implies_Hopf}
  If $\calE$ is a connection on $\calD$, then $\calE$ is also a
  Hopf module and a comodule algebra over $\calD$.
  It is also a one-sided Hopf algebra with left-sided unit $\Blue{1_{\calE}}:=f^{-1}(1_{\calD})$ and
  left-sided antipode.
\end{theorem}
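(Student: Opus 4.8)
The plan is to verify each asserted structure in turn, building on the hypotheses of a connection $f\colon\calE\to\calD$. First I would recall the abstract definitions: $\calE$ is a \emph{Hopf module} over $\calD$ if it is both a $\calD$-module and a $\calD$-comodule with the module and comodule structures compatible in the sense that the coaction is a map of modules; $\calE$ is a \emph{comodule algebra} if the coaction $\calE\to\calE\otimes\calD$ is an algebra map. The key observation is that the required $\calD$-coaction on $\calE$ should be $\rho := (\mathrm{id}_\calE\otimes f)\circ\Delta_\calE\colon \calE\to\calE\otimes\calD$ (using the left-module convention; on the other side one uses $(f\otimes\mathrm{id}_\calE)\circ\Delta_\calE$). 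Coassociativity of $\Delta_\calE$ together with the identity $(f\otimes f)\Delta_\calE = \Delta_\calD f$ immediately gives that $\rho$ is a coaction of the coalgebra $\calD$ on $\calE$. The counit axiom for $\rho$ follows from the counit axiom for $\Delta_\calE$ and $\varepsilon_\calD\circ f = \varepsilon_\calE$, which holds because $f$ is a coalgebra map.

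Next I would check the Hopf module compatibility, i.e.\ that $\rho(e\star d) = \rho(e)\star\Delta_\calD(d)$, where the right-hand side uses the diagonal action of $\calD\otimes\calD$ on $\calE\otimes\calD$ (acting on the second tensor factor by multiplication). Expanding $\rho(e\star d) = (\mathrm{id}\otimes f)\Delta_\calE(e\star d)$ and using the module-coalgebra hypothesis $\Delta_\calE(e\star d) = \Delta_\calE(e)\star\Delta_\calD(d)$, we get $\sum (e'\star d')\otimes f(e''\star d'')$; then $f(e''\star d'') = m_\calD(f(e'')\otimes d'')$ by the second connection identity, which is exactly $\rho(e)\star\Delta_\calD(d)$. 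The comodule-algebra statement is the same computation read as: $\rho$ is an algebra map because $\Delta_\calE$ is (the module action $\star$ on $\calE$ \emph{is} its multiplication in the one-sided Hopf algebra structure, inherited from $\calD$ acting on the distinguished generator $1_\calE$), $f$ is an algebra map by the identity $f(e\star d) = m_\calD(f(e)\otimes d)$, and $m_\calD$ is an algebra map since $\calD$ is commutative-enough — actually here one only needs that $\calD$ is a bialgebra, so $m_\calD$ is a coalgebra map, equivalently $\Delta_\calD$ is an algebra map, which is automatic.

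For the one-sided Hopf algebra claim, I would set $1_\calE := f^{-1}(1_\calD)$; this is well defined as a single element because $\calE$ is connected and $f$ is a connected graded coalgebra map (so $f$ restricts to an isomorphism $\calE_0\xrightarrow{\sim}\calD_0 = \K\cdot 1_\calD$). The product on $\calE$ is $e_1\cdot e_2 := e_1 \star f(e_2)$; associativity follows from associativity of the $\calD$-action and the identity $f(e\star d) = m_\calD(f(e)\otimes d)$, which lets one rewrite $f(e_1\cdot e_2) = m_\calD(f(e_1)\otimes f(e_2))$ and hence $(e_1\cdot e_2)\cdot e_3 = e_1\star m_\calD(f(e_2)\otimes f(e_3)) = e_1\cdot(e_2\cdot e_3)$. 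The element $1_\calE$ is a left unit: $1_\calE\cdot e = 1_\calE\star f(e)$, and since $f(1_\calE)=1_\calD$ and the $\calD$-action is unital, one needs $1_\calE\star d$ to recover the "free" behavior — here I would invoke that $\calE$ is generated as a $\calD$-module by $1_\calE$ together with the module-coalgebra structure; concretely $\Delta_\calE(1_\calE\star d) = \Delta_\calE(1_\calE)\star\Delta_\calD(d) = (1_\calE\otimes 1_\calE)\star\Delta_\calD(d)$, and comparing degrees with connectedness pins down $1_\calE\star d$ on primitives, then one bootstraps. Finally, the left antipode: since $\calE$ is a connected graded coalgebra and the product has a left unit, the standard convolution-inverse argument (solve $S * \mathrm{id} = u\varepsilon$ recursively by degree, using $\calE_0 = \K$) produces a left antipode $S_\calE$; this is the one place where the one-sidedness is genuinely essential, since we do not assume $1_\calE$ is a two-sided unit.

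The main obstacle is the left-unit verification — showing $1_\calE\cdot e = e$ — together with the companion fact that the $\calD$-module structure is "cofree enough" that the product $e_1\cdot e_2 = e_1\star f(e_2)$ genuinely has $1_\calE$ as a left identity; this is where the hypothesis that $f$ is a connected coalgebra \emph{and} module map does real work, as opposed to the Hopf-module and comodule-algebra parts, which are essentially diagram chases from the two connection identities and coassociativity. I would handle it by an induction on degree, using that $f$ is injective in each degree modulo lower-degree ambiguity is false in general, so instead using that $\Delta_\calE(1_\calE\cdot e)=\Delta_\calE(e)$ can be deduced from the module-coalgebra compatibility and then appealing to the fact that a connected graded coalgebra has no nonzero element $x$ with $\Delta(x) = x\otimes 1_\calE + 1_\calE\otimes x$ and $x$ also equal to... — more carefully, one shows $1_\calE\cdot e - e$ is primitive of each degree and lies in the kernel of $f$-related data, then concludes it vanishes. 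Once the left unit is secured, the antipode is routine Takeuchi-style recursion.
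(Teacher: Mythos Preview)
Your treatment of the coaction $\rho=(\id\otimes f)\Delta_\calE$, the Hopf-module compatibility, and the associativity of the product $e\cdot e':=e\star f(e')$ matches the paper's argument (indeed you are more explicit on associativity). The genuine gap is the unit.

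You read ``left-sided unit'' as the assertion $1_\calE\cdot e=e$ and devote the bulk of the proposal to it, calling it ``the main obstacle'' and sketching an induction on degree. But this identity is \emph{false} in general, and the paper says so explicitly: with a right $\calD$-action one has $1_\calE\cdot e = 1_\calE\star f(e)$, which need not equal $e$. (In the painted-tree example $\psym$ of Section~\ref{sec: painted}, left-multiplying by the unit sends $F_q$ to $F_{q^+}$, the fully painted version of $q$.) Your proposed induction cannot succeed; already the intermediate claim $\Delta_\calE(1_\calE\cdot e)=\Delta_\calE(e)$ fails, since the module-coalgebra axiom gives only $\Delta_\calE(1_\calE\cdot e)=\sum(1_\calE\cdot e')\otimes(1_\calE\cdot e'')$. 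What the paper actually proves is the \emph{other} side,
\[
   e\cdot 1_\calE \ =\ e\star f\bigl(f^{-1}(1_\calD)\bigr)\ =\ e\star 1_\calD\ =\ e\,,
\]
which is a one-line consequence of unitality of the $\calD$-action. So the ``main obstacle'' disappears once the correct side is identified, and the antipode is then, as you say, the standard recursive solution of $m_\calE(S\otimes\id)\Delta_\calE=\varepsilon_\calE$. The phrase ``left-sided'' in the theorem refers to the side on which the antipode identity holds, not to $1_\calE$ acting as a left identity for the product.
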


\begin{proof}
 Suppose $\calE$ is a right $\calD$-module.
 Define the product $m_{\calE}:\calE\otimes\calE \to \calE$ via the $\calD$-action:
 $m_{\calE}:=\star\circ(1\otimes f)$.
 The one-sided unit is $1_{\calE}$.
 Then $\Delta_\calE$ is an algebra map.
 Indeed, for $e,e' \in \calE$, we have
\[
  \Delta_\calE(e\cdot e') \ =\  \Delta_\calE(e\star f(e'))
  \ =\ \Delta_\calE e \star \Delta_\calD f(e')
  \ =\  \Delta_\calE e \star (f\otimes f)(\Delta_\calE e')
  \ =\  \Delta_\calE e \cdot \Delta_\calE e'\,.
\]
As usual, $\varepsilon_\calE$ is just projection onto $\calE_0$. The
unit $1_{\calE}$ is one-sided, since
\[
  e\cdot 1_{\calE}
  \ =\ e\star f(1_{\calE})
  \ =\ e\star f(f^{-1}(1_{\calD}))
  \ =\ e\star 1_{\calD}
  \ =\ e \,,
\]
but $1_\calE \cdot e = 1_\calE \star f(e)$ is not necessarily equal
to $e$. The antipode $S$ may be defined recursively to satisfy
$m_\calE(S\otimes \id)\Delta_\calE = \varepsilon_\calE$, just as for
graded bialgebras with two-sided units.

Define $\rho \colon\calE\to\calE\otimes\calD$ by $\rho := (1\otimes
f)\, \Delta_{\calE}$, which gives a coaction so that $\calE$ is a
Hopf module and a comodule algebra over $\calD$.
\end{proof}

\subsection{Operads and operad modules}\label{sec: operad}

Composition of coalgebras is the same product used to define operads
internal to a symmetric monoidal category~\cite[App.
B]{AguMah:2010}. A \demph{monoid} in a category with a product
$\bullet$ is an object $\calD$ with a morphism $\gamma \colon
\calD\bullet\calD \to \calD$ that is associative. An \demph{operad}
is a monoid in the category of graded sets with an analog of the
composition product $\circ$ defined in Section~\ref{sec: cccc main
results}.

The category of connected graded coalgebras is a symmetric monoidal
category under the composition of coalgebras, $\circ$.
 A \demph{graded Hopf operad} $\calD$ is a monoid in the monoidal category of connected graded
 coalgebras and coalgebra maps, under the composition product.
 That is, $\calD$ is equipped with associative composition maps
\[
  \gamma\colon\calD\circ \calD \to \calD,
   \hbox{ \ obeying \ }\Delta_\calD\gamma(a)\ =\
    (\gamma\otimes\gamma)\left(\Delta_{\calD\circ\calD}(a)\right) \
   \hbox{ \ for all \ }a\in \calD\circ\calD\,.
\]

A \demph{graded Hopf operad module $\calE$} is an operad module over
$\calD$ and a graded coassociative coalgebra whose module action is
compatible with its coproduct. We denote the left and right action
maps by $\mu_l:\calD\circ \calE \to \calE$ and $\mu_r:\calE\circ
\calD \to \calE,$ obeying, e.g., $\Delta_\calE\mu_r(b) =
(\mu_r\otimes\mu_r)\Delta_{\calE\circ\calD} b$ for all
$b\in\calE\circ\calD$.

\begin{example}
 $\ysym$ is an operad in the category of vector spaces.
 The action of $\gamma$ on $\composeinline{F_{t_0}}{F_{t_n}}{F_t}$ grafts the
 trees $t_0,\dotsc,t_n$ onto the tree $t$ and, unlike in Example \ref{ex: painted},
 forgets which nodes of the resulting tree came from $t$.
 This is  associative in the appropriate sense.
 The same $\gamma$ makes $\ysym$ an operad in the category of connected graded coalgebras,
 making it a graded Hopf operad.
 Finally, operads are operad modules over themselves, so $\ysym$ is also graded
 Hopf operad module.
\end{example}

\begin{remark}
 Our graded Hopf operads differ from those of \cite{GetJon:1}, who defined a Hopf operad to
 be an operad of \emph{level coalgebras},
 where each component $\calD_n$ is a coalgebra.
\end{remark}

\begin{theorem}\label{thm: inthopf}
 A graded Hopf operad $\calD$ is also a Hopf algebra with product
\begin{gather}\label{eq: operad to algebra}
   a\cdot b\ :=\ \gamma( b \otimes \Delta^{(n)} a)
\end{gather}
 where $b\in \calD_n$ and $\Delta^{(n)}$ is the iterated coproduct from $\calD$ to
 $\calD^{\otimes(n+1)}$.
\end{theorem}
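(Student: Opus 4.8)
The plan is to realize $\calD$ as a connection on itself in the sense of Section~\ref{sec: covering}, and then invoke Theorem~\ref{cover_implies_Hopf}; the only extra work is to upgrade the resulting one-sided Hopf structure to a genuine (two-sided) Hopf algebra, which will follow from the associativity of the operad composition $\gamma$ together with the operad unit. First I would set $\calE:=\calD$ and take $f:=\mathrm{id}_\calD\colon\calE\to\calD$, which is trivially a morphism of connected graded coalgebras. The right $\calD$-module action needed by Theorem~\ref{cover_implies_Hopf} is $\star:=\mu_r\colon\calD\circ\calD\to\calD=\gamma$ restricted appropriately; since $\calD$ is a graded Hopf operad, $\gamma$ is associative and compatible with the coproduct, i.e. $\Delta_\calD\gamma(a)=(\gamma\otimes\gamma)\Delta_{\calD\circ\calD}(a)$, which is exactly the module-coalgebra axiom $\Delta_\calE(e\star d)=\Delta_\calE(e)\star\Delta_\calD(d)$. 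Because $f=\mathrm{id}$, the two compatibility conditions $(f\otimes f)\Delta_\calE(e)=\Delta_\calD f(e)$ and $f(e\star d)=m_\calD(f(e)\otimes d)$ just \emph{define} $m_\calD$ as $\gamma$ precomposed with the relevant structure; unwinding this through the formula $m_\calE=\star\circ(1\otimes f)$ from the proof of Theorem~\ref{cover_implies_Hopf} recovers precisely the product $a\cdot b=\gamma(b\otimes\Delta^{(n)}a)$ for $b\in\calD_n$ asserted in~\eqref{eq: operad to algebra}. So Theorem~\ref{cover_implies_Hopf} immediately gives that $(\calD,\cdot,\Delta_\calD,\varepsilon_\calD)$ is a one-sided (left-unital) Hopf algebra, with left unit $1_\calE=f^{-1}(1_\calD)=1_\calD$.

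The remaining point — and the one I expect to be the main obstacle — is showing the unit is in fact two-sided, i.e. $1_\calD\cdot e=e$ as well as $e\cdot 1_\calD=e$. The right identity $e\cdot 1_\calD=e$ is given for free by Theorem~\ref{cover_implies_Hopf}. For the left identity we must use that $\calD$, being an operad, carries an operad unit $\eta\in\calD_1$ satisfying the left and right unit axioms for $\gamma$: grafting $\eta$ onto the single leaf of any $a$, and grafting a forest of copies of $\eta$ onto the leaves of $a$, both return $a$. Concretely, $1_\calD\cdot e=\gamma(e\otimes\Delta^{(n)}1_\calD)$ where $1_\calD\in\calD_0$; here $\Delta^{(0)}1_\calD=1_\calD$ lies in $\calD_0$, and $\gamma$ applied to $e\otimes 1_\calD$ grafts the empty adjustment, returning $e$ by the operad unit/degree-$0$ normalization. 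I would check this carefully degree by degree, since the subtlety is entirely about how the degree-$0$ part $\calD_0=\K$ interacts with $\gamma$; the graded connectedness of $\calD$ ensures $\calD_0=\K1_\calD$ so there is nothing else it could be. Once both-sided unitality holds, $\calD$ is a connected graded bialgebra, and by Takeuchi's theorem (\cite{Tak:71}, quoted in Section~\ref{sec: cofree def}) it is automatically a Hopf algebra, with antipode the two-sided one refining the left antipode from Theorem~\ref{cover_implies_Hopf}.

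In summary, the steps in order are: (1) identify $\calD$ as a $\calD$-module coalgebra via $\gamma$, using the Hopf-operad compatibility of $\gamma$ with $\Delta_\calD$; (2) take $f=\mathrm{id}$ and verify the connection axioms, noting they force $m_\calD$ to be the formula~\eqref{eq: operad to algebra}; (3) apply Theorem~\ref{cover_implies_Hopf} to get a left-unital one-sided Hopf algebra; (4) use the operad unit $\eta\in\calD_1$ and connectedness ($\calD_0=\K$) to promote the left unit to a two-sided unit, so that $\calD$ is a connected graded bialgebra; (5) conclude via Takeuchi that $\calD$ is a Hopf algebra. I expect step (4) to require the only genuine argument — everything else is bookkeeping that translates operad axioms into bialgebra axioms — because it is where one must pin down the behavior of $\gamma$ on the degree-zero component and confirm that the one-sidedness in Theorem~\ref{cover_implies_Hopf} disappears in this special case.
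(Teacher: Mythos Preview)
The paper is an extended abstract and gives no proof of Theorem~\ref{thm: inthopf}; your strategy---realize $\calD$ as a connection on itself and invoke Theorem~\ref{cover_implies_Hopf}, then upgrade the one-sided unit to two-sided---is exactly the argument the surrounding architecture (Lemmas~\ref{lem:opmod}, \ref{lem:coal}, Theorem~\ref{thm:suffcover}) is set up to support, so your approach is the intended one.

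Two points need tightening. First, you write that the Hopf-operad compatibility $\Delta_\calD\gamma=(\gamma\otimes\gamma)\Delta_{\calD\circ\calD}$ ``is exactly the module-coalgebra axiom $\Delta_\calE(e\star d)=\Delta_\calE(e)\star\Delta_\calD(d)$.'' It is not: the former concerns $\gamma\colon\calD\circ\calD\to\calD$, the latter concerns $\star\colon\calD\otimes\calD\to\calD$, and $\circ\ne\otimes$. The bridge is the map $d\otimes e\mapsto d\otimes\Delta^{(|d|)}e$ from $\calD\otimes\calD$ into $\calD\circ\calD$, and you must check that this map is a coalgebra morphism (using coassociativity of $\Delta_\calD$ and the formula~\eqref{eq: delta}) so that $\star:=\gamma\circ(\text{this map})$ inherits the coalgebra compatibility. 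This is precisely the content of Lemma~\ref{lem:coal}, so you should either invoke it or supply the one-line coassociativity computation; as written your step~(1) claims too much.

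Second, in this paper's grading convention $\calD_n$ has $n{+}1$ leaves, so the monoidal unit for $\circ$ is $\K$ concentrated in degree~$0$, and the operad unit $\eta$ lands in $\calD_0$, not $\calD_1$. Your step~(4) then reads: for $e\in\calD_n$, $1_\calD\cdot e=\gamma(e\otimes\Delta^{(n)}1_\calD)=\gamma(e\otimes 1_\calD^{\otimes(n+1)})=e$ by the right unit axiom for the monoid $(\calD,\gamma,\eta)$. (Your aside ``$\Delta^{(0)}1_\calD=1_\calD$'' is the wrong instance; you need the grouplike property $\Delta^{(n)}1_\calD=1_\calD^{\otimes(n+1)}$.) With these corrections your steps (1)--(5) go through, and Takeuchi finishes the argument as you say.
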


 It is possible to swap the roles of $a$ and $b$ on the right-hand side
 of~\eqref{eq: operad to algebra}.
 Our choice agrees with the product in $\ysym$ and $\csym$.
  In fact, the well-known Hopf algebra structures of $\ysym$ and $\csym$
 follow from their structure as  graded Hopf operads.

\begin{lemma}\label{lem:opmod}
  If $\calC$ is a graded coalgebra and $\calD$ is a graded Hopf operad, then
  $\calD \circ \calC$ is a (left) graded Hopf operad module and
  $\calC \circ \calD$ is a (right) graded Hopf operad module.
\end{lemma}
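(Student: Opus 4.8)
The plan is to construct the operad-module action maps $\mu_l\colon\calD\circ(\calD\circ\calC)\to\calD\circ\calC$ and $\mu_r\colon(\calC\circ\calD)\circ\calD\to\calC\circ\calD$ directly out of the composition map $\gamma\colon\calD\circ\calD\to\calD$ and the counit $\varepsilon_\calD$, and then verify that these are coalgebra maps compatible with the coproduct $\Delta$ of formula~\eqref{eq: delta}. For the left module structure on $\calD\circ\calC$, an element of $\calD\circ(\calD\circ\calC)$ is an indecomposable tensor of the shape $\composeinline{x_0}{x_n}{a}$ with $a\in\calD_n$ and each $x_j\in\calD\circ\calC$, say $x_j = \composeinline{c_{j,0}}{c_{j,k_j}}{d_j}$; I would define $\mu_l$ to apply $\gamma$ to $\composeinline{d_0}{d_n}{a}$, producing an element $d\in\calD$, and to concatenate the $\calC$-strings $(c_{0,0},\dots,c_{0,k_0},c_{1,0},\dots,c_{n,k_n})$ into the single $\calC$-forest attached to $d$. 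Because $\gamma$ is a map of degree-preserving objects and the total number of $\calC$-slots works out correctly (the sum of the $k_j{+}1$ equals one more than the degree of the $\calD$-part), this lands in $\calD\circ\calC$. The right action $\mu_r$ on $\calC\circ\calD$ is the mirror image: given $\composeinline{x_0}{x_n}{c}$ with $c\in\calC_n$ and $x_j=\composeinline{d_{j,0}}{d_{j,k_j}}{c_j}$ ... actually here the outer object must itself be of the form $\calC\circ\calD$, so I instead iterate $\gamma$ on the $\calD$-slots within a fixed $\calC$-scaffold. Associativity of $\mu_l$ (resp. $\mu_r$) as an operad-module action follows from associativity of $\gamma$ together with the obvious associativity of string concatenation, which I would check on indecomposable tensors.

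The heart of the argument is the compatibility $\Delta_{\calD\circ\calC}\,\mu_l = (\mu_l\otimes\mu_l)\,\Delta_{\calD\circ(\calD\circ\calC)}$ (and its right-handed analogue). I would prove this by expanding both sides on an indecomposable tensor using~\eqref{eq: delta}. On one side, $\Delta_{\calD\circ\calC}$ cuts the $\calD$-part $d=\gamma(\composeinline{d_0}{d_n}{a})$ at some node and cuts one $\calC$-entry of the concatenated forest; on the other side, the outer coproduct on $\calD\circ(\calD\circ\calC)$ first cuts $a$ and one inner entry $x_i$, each inner entry being itself cut by $\Delta_{\calD\circ\calC}$. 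The key point is that the hypothesis that $\calD$ is a \emph{graded Hopf operad}, i.e.\ $\Delta_\calD\gamma = (\gamma\otimes\gamma)\Delta_{\calD\circ\calD}$, says precisely that cutting the output $d$ of $\gamma$ is the same as cutting $a$ and cutting each $d_j$ in a coordinated way; after grafting, this matches cutting the concatenated $\calC$-forest at the corresponding spot. I would organize the bookkeeping by noting that a cut of the output tree at a node lying "in the part coming from $a$" corresponds to a cut of $a$ together with counit evaluations on a suffix of the $d_j$'s, while a cut "inside one of the grafted $d_i$" corresponds to $\varepsilon$-ing out $a$'s coproduct appropriately — this is exactly the double-sum structure of $\Delta_{\calD\circ\calD}$, and the composition coproduct's index $i$ in~\eqref{eq: delta} tracks which $\calC$-slot (equivalently which $d_j$) the cut falls into.

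The main obstacle I expect is the indexing in this last verification: matching the triple sum in~\eqref{eq: delta} applied to the composed element against the nested sums produced by applying~\eqref{eq: delta} twice and then concatenating. The degree constraint $|d'|=i$ in the outer cut of $\calD\circ(\calD\circ\calC)$ must be reconciled with the constraint that the cut of $\gamma$'s output falls in the appropriate contiguous block of $\calC$-slots, and one must be careful that counit terms $\varepsilon_\calC(c_{j,\ell})$ and $\varepsilon_\calD$ appearing from partial cuts line up on both sides. I would handle this cleanly by first checking the two extreme cases — a cut entirely within the $a$-part (where the inner tensors are split trivially, contributing only through $\varepsilon$) and a cut entirely within a single $x_i$ (where $a$ is cut trivially) — and then observing that the general cut is uniquely a "shuffle" of these two types, so that the Hopf-operad identity for $\gamma$ forces the match term by term. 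Everything else — that $\mu_l,\mu_r$ are counit-compatible, degree-preserving, and that the coalgebra structure is the one from Theorem~\ref{thm: cc cofree} — is routine and follows by the same expansion.
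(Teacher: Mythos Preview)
The paper is an extended abstract and gives no proof of this lemma, so there is nothing to compare against line by line. Your overall strategy is sound: build the action from $\gamma$, get associativity of the action from associativity of $\gamma$, and get coproduct-compatibility from the graded-Hopf-operad identity $\Delta_\calD\gamma=(\gamma\otimes\gamma)\Delta_{\calD\circ\calD}$.

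That said, you are doing far more work than the paper's framework requires. Just before defining graded Hopf operads, the paper records that connected graded coalgebras form a symmetric monoidal category under $\circ$, and the functoriality theorem in Section~\ref{sec: cccc main results} says that $\psi\circ\varphi$ is a coalgebra map whenever $\psi$ and $\varphi$ are. Consequently the associator $\alpha\colon\calD\circ(\calD\circ\calC)\xrightarrow{\ \sim\ }(\calD\circ\calD)\circ\calC$ is a coalgebra isomorphism, and $\mu_l:=(\gamma\circ\id_\calC)\,\alpha$ is a composite of coalgebra maps. The compatibility $\Delta_{\calD\circ\calC}\,\mu_l=(\mu_l\otimes\mu_l)\,\Delta_{\calD\circ(\calD\circ\calC)}$ is then automatic, with no need for the term-by-term matching of the sums in~\eqref{eq: delta} that you flag as the main obstacle. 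Associativity of $\mu_l$ is likewise formal, from associativity of $\gamma$ plus the pentagon for $\alpha$. This is almost certainly the argument the authors have in mind, given how they have set things up.

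Your description of $\mu_r$ contains a real slip. The element $\composeinline{x_0}{x_n}{c}$ with $c\in\calC_n$ and $x_j=\composeinline{d_{j,0}}{d_{j,k_j}}{c_j}$ lies in $\calC\circ(\calD\circ\calC)$, not in $(\calC\circ\calD)\circ\calD$. The right action has source $(\calC\circ\calD)\circ\calD\cong\calC\circ(\calD\circ\calD)$, an element of which is a base $c\in\calC_n$ carrying $n{+}1$ elements of $\calD\circ\calD$; the map $\mu_r=\id_\calC\circ\gamma$ applies $\gamma$ separately to each of those $n{+}1$ slots. Once written this way the right case is symmetric to the left, and again the coalgebra compatibility is immediate from functoriality rather than from a direct expansion.
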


\begin{lemma}\label{lem:coal}
 A graded Hopf operad module over a graded Hopf operad
 is also a module coalgebra.
\end{lemma}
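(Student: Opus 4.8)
\emph{Proof sketch (proposal).}
The plan is to extract from the operad--module structure a genuine $\calD$-module structure on $\calE$, in the same way that Theorem~\ref{thm: inthopf} extracts the Hopf algebra product on $\calD$ from its composition map $\gamma$. Write $\mu_r\colon\calE\circ\calD\to\calE$ for the right operad action; the left--module case is the mirror image. For $e\in\calE_n$ and $d\in\calD$ set
\[
  e\star d\ :=\ \mu_r\bigl(e\otimes\Delta_\calD^{(n)}(d)\bigr)\ =\ \sum\mu_r\Bigl(\compose{d_{(0)}}{d_{(n)}}{e}\Bigr)\,,
\]
where $\Delta_\calD^{(n)}\colon\calD\to\calD^{\otimes(n+1)}$ is the iterated coproduct and the sum runs over the Sweedler components $d_{(0)}\otimes\dotsb\otimes d_{(n)}$ of $\Delta_\calD^{(n)}(d)$; this parallels formula~\eqref{eq: operad to algebra} for the product of $\calD$. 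There are then three things to check: that $\star$ is unital, that it is associative for the Hopf product of $\calD$ from Theorem~\ref{thm: inthopf} (so that $\calE$ is an honest right $\calD$-module), and that the action commutes with the coproducts in the sense of Section~\ref{sec: covering}, namely $\Delta_\calE(e\star d)=\Delta_\calE(e)\star\Delta_\calD(d)$ with $\calD\otimes\calD$ acting diagonally on $\calE\otimes\calE$.

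Unitality and associativity are formal. Since $\calD$ is connected, $\Delta_\calD^{(n)}(1_\calD)=1_\calD^{\otimes(n+1)}$, and the unit axiom for the operad module gives $\mu_r(e\otimes 1_\calD^{\otimes(n+1)})=e$, so $e\star 1_\calD=e$. Associativity, $(e\star d_1)\star d_2=e\star(d_1\cdot d_2)$, is the very computation behind Theorem~\ref{thm: inthopf}: expand both sides by the definition of $\star$, merge the two nested applications of $\mu_r$ using associativity of the operad action, and recognize the inner composite as $\mu_r$ applied to $e$ and to iterated coproducts of $d_1,d_2$ assembled through $\gamma$ --- which, after pushing the coproducts through with coassociativity of $\Delta_\calD$, is exactly $e\star(d_1\cdot d_2)$ for $d_1\cdot d_2=\gamma(d_2\otimes\Delta_\calD^{(|d_2|)}d_1)$.

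The coproduct compatibility is where the work is. Apply the compositional coproduct~\eqref{eq: delta} to $e\otimes\Delta_\calD^{(n)}(d)\in\calE_n\otimes\calD^{\otimes(n+1)}$, treating $e$ as the outer tensor factor (the denominator in the $\tfrac{\cdot}{\cdot}$ notation) and the Sweedler components of $\Delta_\calD^{(n)}(d)$ as the $n+1$ inner slots. The $i$-th summand of~\eqref{eq: delta} splits $e$ as $e'\otimes e''$ with $|e'|=i$, applies one more $\Delta_\calD$ to the $i$-th inner slot, and groups the inner factors into a left string of length $i+1$ and a right string of length $n-i+1$. By coassociativity, applying $\Delta_\calD$ to the $i$-th tensor factor of $\Delta_\calD^{(n)}(d)$ yields $\Delta_\calD^{(n+1)}(d)$, and the splitting $\Delta_\calD^{(n+1)}(d)=\sum\Delta_\calD^{(i)}(d')\otimes\Delta_\calD^{(n-i)}(d'')$ (for $\Delta_\calD(d)=\sum d'\otimes d''$) identifies those two strings as $\Delta_\calD^{(i)}(d')$ and $\Delta_\calD^{(n-i)}(d'')$. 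Since $i$ is forced to equal $|e'|$, summing over $i$ gives
\[
  \Delta_{\calE\circ\calD}\bigl(e\otimes\Delta_\calD^{(n)}(d)\bigr)\ =\ \sum_{(e),(d)}\bigl(e'\otimes\Delta_\calD^{(|e'|)}(d')\bigr)\otimes\bigl(e''\otimes\Delta_\calD^{(|e''|)}(d'')\bigr)\,,
\]
and applying $\mu_r\otimes\mu_r$ together with the Hopf operad module compatibility $\Delta_\calE\mu_r=(\mu_r\otimes\mu_r)\Delta_{\calE\circ\calD}$ gives $\Delta_\calE(e\star d)=\sum_{(e),(d)}(e'\star d')\otimes(e''\star d'')=\Delta_\calE(e)\star\Delta_\calD(d)$.

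I expect the main obstacle to be precisely the bookkeeping in that last step: the single index $i$ of~\eqref{eq: delta} must simultaneously record where $e$ splits and where the extra coproduct enters $\Delta_\calD^{(n)}(d)$, and one must see --- through coassociativity of $\Delta_\calD$ --- that the two resulting strings of $\calD$-components are exactly the iterated coproducts $\Delta_\calD^{(|e'|)}(d')$ and $\Delta_\calD^{(|e''|)}(d'')$, with each term counted once. Everything else is formal, and the left--module statement follows verbatim using $\mu_l$. \hfill$\square$
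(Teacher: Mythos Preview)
The paper is an extended abstract and does not supply a proof of Lemma~\ref{lem:coal}; it is simply stated. So there is no ``paper's proof'' to compare against, and your sketch is precisely the argument one would expect in a full version: define $e\star d:=\mu_r\bigl(e\otimes\Delta_\calD^{(|e|)}(d)\bigr)$ by analogy with~\eqref{eq: operad to algebra}, and verify the module--coalgebra axioms.

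Your argument is sound. The unitality and associativity checks are exactly as you say---parallel to what would be needed for Theorem~\ref{thm: inthopf}, with $\mu_r$ in place of $\gamma$ and the operad--module associativity $\mu_r\,(\mu_r\circ 1)=\mu_r\,(1\circ\gamma)$ supplying the key identity. The compatibility step, which you flag as the delicate one, is also correct: applying~\eqref{eq: delta} to $e\otimes\Delta_\calD^{(n)}(d)$ with the index $i$ recording $|e'|$, the extra coproduct on the $i$-th inner slot turns $\Delta_\calD^{(n)}(d)$ into $\Delta_\calD^{(n+1)}(d)$ by coassociativity, and the regrouping $\Delta_\calD^{(n+1)}(d)=\sum_{(d)}\Delta_\calD^{(i)}(d')\otimes\Delta_\calD^{(n-i)}(d'')$ matches the split of inner slots forced by $|e'|=i$, $|e''|=n-i$. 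Each term is counted once because the index $i$ is uniquely determined by the degree of $e'$. Then the Hopf--operad--module axiom $\Delta_\calE\mu_r=(\mu_r\otimes\mu_r)\Delta_{\calE\circ\calD}$ finishes the job. There is no hidden obstacle here; your bookkeeping is right.
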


\begin{theorem}\label{thm:suffcover}
  Given a coalgebra map $\lambda \colon \calC \to \calD$ from a  connected graded coalgebra $\calD$
  to a graded Hopf operad $\calD$ , the maps
  $\gamma\circ(1\circ\lambda)\colon \calD\circ\calC\to \calD$ and
  $\gamma\circ(\lambda\circ 1)\colon \calD\circ\calC\to \calD$ give
  connections on $\calD$.
\end{theorem}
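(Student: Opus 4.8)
The plan is to verify directly that $f:=\gamma\circ(1\circ\lambda)\colon\calD\circ\calC\to\calD$ meets the four requirements in the definition of a connection (Section~\ref{sec: covering}): that $f$ is a morphism of graded coalgebras, that $\calD\circ\calC$ is a $\calD$-module coalgebra, that $f$ is a morphism of $\calD$-module coalgebras, and that $\calD\circ\calC$ is connected. Throughout, $\calD$ carries the Hopf-algebra structure of Theorem~\ref{thm: inthopf} and acts on itself by multiplication; the companion map $\gamma\circ(\lambda\circ 1)$ is handled by the mirror-image argument, using the right operad module $\calC\circ\calD$ of Lemma~\ref{lem:opmod} in place of the left operad module $\calD\circ\calC$.

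Three of the four requirements are essentially formal. By Lemma~\ref{lem:opmod}, $\calD\circ\calC$ is a graded Hopf operad module over $\calD$, with action $\mu_l\colon\calD\circ(\calD\circ\calC)\to\calD\circ\calC$; by Lemma~\ref{lem:coal} it is therefore a $\calD$-module coalgebra, where the $\calD$-action $\star$ is obtained from $\mu_l$ exactly as the product of $\calD$ is obtained from $\gamma$ in Theorem~\ref{thm: inthopf}, namely $e\star d:=\mu_l\bigl(d\otimes\Delta^{(n)}_{\calD\circ\calC}(e)\bigr)$ for $d\in\calD_n$. Connectedness is a degree count: an indecomposable tensor $\composeinline{c_0}{c_n}{d}$ has degree $|d|+|c_0|+\dots+|c_n|$, which vanishes only when $n=0$, $d=1_\calD$, and $c_0=1_\calC$ (using $\calC_0=\calD_0=\K$), so $(\calD\circ\calC)_0=\K$. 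And $f$ is a morphism of graded coalgebras because $1\circ\lambda\colon\calD\circ\calC\to\calD\circ\calD$ is one by functoriality of the composition product (it is the functorial image of the coalgebra maps $1_\calD$ and $\lambda$), while $\gamma\colon\calD\circ\calD\to\calD$ is one by the defining axiom of a graded Hopf operad; composing, $f$ is a coalgebra map, and iterating, $f^{\otimes(n+1)}\circ\Delta^{(n)}_{\calD\circ\calC}=\Delta^{(n)}_{\calD}\circ f$.

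The remaining requirement --- that $f$ is a module map, $f(e\star d)=m_{\calD}\bigl(f(e)\otimes d\bigr)$ for $e\in\calD\circ\calC$ and $d\in\calD_n$ --- is the real content. Substituting the twist formulas $e\star d=\mu_l(d\otimes\Delta^{(n)}_{\calD\circ\calC}e)$ and $m_{\calD}(f(e)\otimes d)=\gamma(d\otimes\Delta^{(n)}_{\calD}f(e))$ from Theorem~\ref{thm: inthopf}, together with the coalgebra-map identity for $f$ just noted, reduces everything to the single equation
\[
  f\circ\mu_l\ =\ \gamma\circ(1\circ f)
\]
of maps $\calD\circ(\calD\circ\calC)\to\calD$; that is, to the statement that $f$ is a morphism of left $\calD$-operad-modules from $\calD\circ\calC$ to $\calD$ (the latter a module over itself via $\gamma$). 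Unwinding $\mu_l$ as $\gamma\circ 1$ preceded by the reassociation isomorphism $\calD\circ(\calD\circ\calC)\xrightarrow{\ \sim\ }(\calD\circ\calD)\circ\calC$, and observing that $\lambda$, which affects only the innermost $\calC$-layer, commutes with the $\gamma$ applied to the two $\calD$-layers, this equation unwinds to precisely the associativity of the operad composition map $\gamma$. I expect this bookkeeping --- keeping the reassociation isomorphisms, the gradings that control how many tensor factors occur at each stage, and the placement of $\lambda$ all consistent --- to be the only genuine obstacle; once it is in hand, Theorem~\ref{cover_implies_Hopf} upgrades each of $\calD\circ\calC$ and $\calC\circ\calD$ to a one-sided Hopf algebra, a Hopf module, and a comodule algebra over $\calD$.
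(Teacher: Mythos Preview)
The paper is an extended abstract and states Theorem~\ref{thm:suffcover} without proof, so there is no argument in the text to compare against. That said, the sequence Lemma~\ref{lem:opmod} $\to$ Lemma~\ref{lem:coal} $\to$ Theorem~\ref{thm:suffcover} makes clear what the intended route is, and your proposal follows exactly that route: use Lemma~\ref{lem:opmod} to get the operad-module structure on $\calD\circ\calC$, Lemma~\ref{lem:coal} to upgrade it to a $\calD$-module coalgebra, functoriality of $\circ$ together with the Hopf-operad axiom on $\gamma$ to see that $f=\gamma\,(1\circ\lambda)$ is a coalgebra map, and then reduce the module-map condition to operad associativity.

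Your reduction is correct. With $\mu_l=(\gamma\circ 1_\calC)\,\alpha$ (associator $\alpha\colon\calD\circ(\calD\circ\calC)\xrightarrow{\sim}(\calD\circ\calD)\circ\calC$), bifunctoriality of $\circ$ gives $(1_\calD\circ\lambda)(\gamma\circ 1_\calC)=\gamma\circ\lambda$, so $f\,\mu_l=\gamma\,(\gamma\circ\lambda)\,\alpha$. On the other side, $\gamma\,(1_\calD\circ f)=\gamma\,(1_\calD\circ\gamma)\,(1_\calD\circ(1_\calD\circ\lambda))$, and the operad associativity $\gamma\,(1_\calD\circ\gamma)=\gamma\,(\gamma\circ 1_\calD)\,\alpha$ together with naturality of $\alpha$ collapses this to the same expression. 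So the ``bookkeeping'' you flag as the only obstacle does go through; it would strengthen the write-up to include exactly this two-line computation rather than leave it as an expectation. You also correctly read the second map as $\gamma\,(\lambda\circ 1)\colon\calC\circ\calD\to\calD$ (the domain is misprinted in the statement), with the mirror argument via the right module structure of Lemma~\ref{lem:opmod}.
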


\subsection{Examples of module coalgebra connections}\label{sec: Hopf op examples}

Eight of the nine compositions of Example~\ref{sec: cccc examples}
are connections on one or both of the factors $\calC$ and $\calD$.

\begin{theorem}\label{thm:eightex}
  For $\calC\in\{\ssym,\ysym,\csym\}$, the coalgebra compositions $\calC\circ\csym$ and
  $\csym\circ\calC$ are connections on $\csym$.
  For $\calC \in \{\ssym,\ysym, \csym\}$, the coalgebra compositions $\calC\circ\ysym$ and
  $\ysym\circ\calC$ are connections on $\ysym$.
\end{theorem}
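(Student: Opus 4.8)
The plan is to obtain each of the eight compositions as a \emph{connection} in the sense of Section~\ref{sec: covering} by a single application of Theorem~\ref{thm:suffcover}; once that is done, the one-sided Hopf algebra, Hopf module, and comodule algebra structures come for free from Theorem~\ref{cover_implies_Hopf}. Theorem~\ref{thm:suffcover} asks for two inputs: that the factor onto which we are connecting ($\csym$ or $\ysym$) is a graded Hopf operad, and a morphism of graded coalgebras $\lambda$ from the other factor into it. For $\ysym$ the graded Hopf operad structure is the one described in the example of Section~\ref{sec: operad}; for $\csym$ it is immediate, since $\csym$ is one-dimensional in each degree, grafting combs and applying $\bkappa$ is plainly associative, and compatibility of this composition with the coproduct is exactly the identification of $\csym$ with the divided power Hopf algebra.

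Next I would assemble the six coalgebra maps needed. Five are already at hand: the identities $\mathrm{id}_{\csym}$ and $\mathrm{id}_{\ysym}$, together with $\bkappa\colon\ysym\to\csym$, $\bkappa\circ\btau\colon\ssym\to\csym$, and $\btau\colon\ssym\to\ysym$, all of which are morphisms of graded coalgebras by Proposition~\ref{thm: Hopf examples}. The sixth, a morphism $\lambda\colon\csym\to\ysym$, runs opposite to every arrow in Figure~\ref{fig: diamond} and has to be built by hand; this is the one genuinely non-formal step. Here is how I would do it. The space $\ysym_1$ is one-dimensional, spanned by the basis element $p$ indexed by the unique one-node tree, and $p$ is primitive. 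Working in characteristic zero, set $g_0:=1$ and, for $n\ge1$, let $g_n\in\ysym_n$ be $1/n!$ times the $n$-th power of $p$ for the product of $\ysym$. Since the coproduct of $\ysym$ is an algebra homomorphism and $p\otimes1$ commutes with $1\otimes p$ in $\ysym\otimes\ysym$, the binomial theorem gives $\Delta(g_n)=\sum_{i+j=n}g_i\otimes g_j$, so $F_{c_n}\mapsto g_n$ defines a morphism of graded coalgebras $\csym\to\ysym$ (equivalently: a coalgebra map out of the divided power coalgebra is precisely a divided power sequence in the target, and this particular one is a section of $\bkappa$, since $\bkappa(p)=F_{c_1}$ and $F_{c_1}^{\,n}=n!\,F_{c_n}$ in $\csym$).

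With these maps in place the theorem follows by inspection: for each $\calC\in\{\ssym,\ysym,\csym\}$, feeding the appropriate $\lambda\colon\calC\to\csym$ into Theorem~\ref{thm:suffcover} shows that $\csym\circ\calC$ and $\calC\circ\csym$ are connections on $\csym$, and feeding in $\lambda\colon\calC\to\ysym$ shows that $\ysym\circ\calC$ and $\calC\circ\ysym$ are connections on $\ysym$; connectedness of $\calD\circ\calC$ is automatic because $(\calD\circ\calC)_0=\calD_0\otimes\calC_0=\K$. The main obstacle, as noted, is producing $\lambda\colon\csym\to\ysym$, since it cannot simply be quoted from the diagram; everything else is bookkeeping with Theorem~\ref{thm:suffcover}. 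I would also sanity-check that the resulting connections $\gamma\circ(1\circ\lambda)$ and $\gamma\circ(\lambda\circ1)$ specialize to the expected grafting-of-trees descriptions, but that verification is routine.
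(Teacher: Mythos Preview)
Your overall plan is exactly the one the paper sets up: reduce Theorem~\ref{thm:eightex} to Theorem~\ref{thm:suffcover} by exhibiting each factor as a graded Hopf operad and supplying a coalgebra map $\lambda\colon\calC\to\calD$ in each case. Your identification of the relevant operad structures on $\ysym$ and $\csym$, and of the maps $\btau$, $\bkappa$, $\bkappa\btau$, and the identities, matches the paper's framework, and your check that $(\calD\circ\calC)_0=\K$ is the right way to see connectedness.

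The one place you work harder than necessary is the map $\lambda\colon\csym\to\ysym$. You build it as a divided-power sequence $F_{c_n}\mapsto p^n/n!$, which is correct (and is indeed a section of $\bkappa$), but there is a more direct choice: the tautological inclusion $F_{c_n}\mapsto F_{c_n}$ sending the comb to itself viewed as a planar binary tree. This is already a graded coalgebra map, because splitting a comb at any leaf in $\ysym$ produces two combs, so $\Delta_{\ysym}(F_{c_n})=\sum_{i+j=n}F_{c_i}\otimes F_{c_j}$ on the nose; no characteristic-zero hypothesis or binomial computation is needed. Either section suffices for Theorem~\ref{thm:suffcover}, and both yield the same class of connections, so your argument is complete as written; the inclusion is simply the cleaner choice and is the one implicit in the paper's setup.
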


Note that $\csym\circ\ysym$ is a connection on both $\csym$ and on
$\ysym$, which gives two distinct one-sided Hopf algebra structures.
Similarly, $\ysym\circ\ysym$ is a connection on $\ysym$ in two
distinct ways (again leading to two distinct one-sided Hopf
structures).


\section{Three Examples}\label{sec:Examples}

The three underlined algebras in Example~\ref{sec: cccc examples}
arose previously in algebra, topology, and category theory.

\subsection{Painted Trees} \label{sec: painted}
 A \demph{painted binary tree} is a
planar binary tree $t$, together with a (possibly empty) upper order
ideal of its node poset. We indicate this ideal  by painting on top
of a representation of $t$. For example,
\[
    \includegraphics{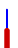}\, {,}   \quad\quad
    \includegraphics{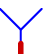}\!\! {,} \quad\quad
    \includegraphics{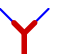} \!\! {,} \quad\quad
    \includegraphics{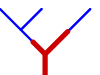} \!\! {,} \quad\quad
    \includegraphics{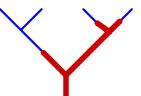} \!\!\! {.}
\]

An \demph{$A_n$-space} is a topological $H$-space with a weakly
associative multiplication of points (\cite{Sta:1963}). Maps between
$A_{n}$-spaces preserve the multiplicative structure only up to
homotopy. \cite{Sta:1970} described these maps combinatorially using
cell complexes called multiplihedra, while~\cite{BoaVog:1973}  used
spaces of painted trees. Both the spaces of trees and the cell
complexes are homeomorphic to convex polytope realizations of the
multiplihedra as shown in (\cite{forcey1}).

If $f\colon ( X,\Blue{\Placeholder}) \to (Y,\Red{\ast})$ is a map of
$A_n$-spaces, then the different ways to multiply and map $n$ points
of $X$ are represented by a painted tree. Unpainted nodes are
multiplications in $X$, painted nodes are multiplications in $Y$,
and the beginning of the painting indicates that $f$ is applied to a
given point in $X$,
\[
    f(\Blue{a})\,\Red{\bm\ast}\,\bigl(f(\Blue{b\,\Placeholder\,c})\,
     \Red{\bm\ast}\,f(\Blue{d})\bigr)
    \ \longleftrightarrow \
    {\raisebox{-.5\height}{\includegraphics{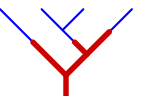}}}.
\]

\subsubsection{Algebra structures on painted trees.}

Let $\p_n$ be the poset of painted trees on $n$ internal nodes, with
partial order inherited from the identification with $\m_{n+1}$.
%
%
%
%
%
%
%
%
The order on $\m_{n+1}$ is studied in~(\cite{FLS:1}).

We describe the key definitions of Section~\ref{sec: cccc main
results} and Section~\ref{sec: hopf} for
$\Blue{\psym}:=\ysym\circ\ysym$. In the fundamental basis $\bigl\{
F_p \mid p \in \p_\bb \bigr\}$ of $\psym$ , the counit is
$\varepsilon(F_p) =\delta_{0,|p|}$, and the product is given by
 \begin{gather*}
        \Delta(F_p)\ =\ \sum_{p \psplit (p_0,p_1)} F_{p_0} \otimes F_{p_1}\, ,
 \end{gather*}
 where the painting in $p\in\p_n$ is preserved in the splitting $p\psplit(p_0,p_1)$.

For example, we have
\[
  \Delta(F_{\;\includegraphics{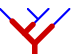}})\ = \
   1\otimes F_{\;\includegraphics{p2314.eps}}\ +\
   F_{\;\includegraphics{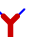}}\otimes F_{\;\includegraphics{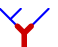}}
   \ +\ F_{\;\includegraphics{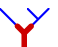}}\otimes F_{\;\includegraphics{p12.eps}}
   \ +\ F_{\;\includegraphics{p2314.eps}}\otimes 1\,.
\]

The identity map on $\ysym$ makes $\psym$ into a connection on
$\ysym$. By Theorem~\ref{cover_implies_Hopf}, $\psym$ is thus also a
one-sided Hopf algebra, a $\ysym$-Hopf module, and a
$\ysym$-comodule algebra. The product $F_p\cdot F_q$ in $\psym$ as
 \[
    F_p\cdot F_q\  =\  \sum_{p\psplit (p_0,p_1,\dots,p_r)} F_{(p_0,p_1,\dots,p_r)/q^+} \,,
 \]
where the painting in $p$ is preserved in the splitting
$(p_0,p_1,\dots,p_r)$, and $q^+$ signifies that $q$ is painted
completely before grafting. For example,
\[
     F_{\;\includegraphics{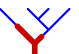}}\cdot
     F_{\;\includegraphics{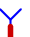}}
     \ =\
     F_{\;\includegraphics{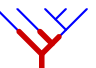}}\ +\
     F_{\;\includegraphics{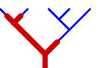}}\ +\
     F_{\;\includegraphics{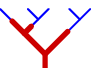}}\ +\
     F_{\;\includegraphics{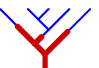}}\,.
\]

The painted tree \includegraphics{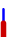} with $0$ nodes is a right
multiplicative identity element,
\[
    F_{\;\includegraphics{p0.eps}} \cdot F_q\ =\
   F_{q^+} \qquad\hbox{and}\qquad F_q\cdot F_{\;\includegraphics{p0.eps}}\ =\
  F_q \qquad \mbox{for } q\in \p_\bb\,.
\]

As $\psym$ is graded and connected, it has an antipode.

\begin{theorem}\label{thm: painted is hopf}
 There are unit and antipode maps $\mu\colon \K \to \psym$ and
 $S \colon \psym \to \psym$ making $\psym$ a one-sided Hopf algebra.
\end{theorem}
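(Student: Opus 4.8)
The plan is to recognize that Theorem~\ref{thm: painted is hopf} is essentially a corollary of the general machinery developed in Section~\ref{sec: hopf}, specialized to $\psym = \ysym\circ\ysym$, together with Takeuchi-style existence of an antipode for connected graded one-sided bialgebras. First I would observe that $\ysym$ is a graded Hopf operad (this is asserted in the Example following Lemma~\ref{lem:coal}, via the grafting map $\gamma$ that forgets the provenance of nodes), and that the identity map $\lambda = \mathrm{id}\colon\ysym\to\ysym$ is trivially a coalgebra map from a connected graded coalgebra to a graded Hopf operad. Then Theorem~\ref{thm:suffcover} applies with $\calC = \calD = \ysym$: the map $\gamma\circ(1\circ\mathrm{id})\colon \ysym\circ\ysym\to\ysym$ is a connection on $\ysym$. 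Hence $\psym = \ysym\circ\ysym$ satisfies the hypothesis of Theorem~\ref{cover_implies_Hopf}, which immediately yields that $\psym$ is a one-sided Hopf algebra with left-sided unit $1_\psym = f^{-1}(1_\ysym) = F_{\includegraphics{p0.eps}}$ (the empty painted tree) and a (left-sided) antipode.

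Next I would check that the multiplication produced by this abstract argument coincides with the explicit formula $F_p\cdot F_q = \sum_{p\psplit(p_0,\dots,p_r)} F_{(p_0,\dots,p_r)/q^+}$ displayed just above the theorem, so that the ``unit and antipode maps $\mu$ and $S$'' referred to in the statement are the concrete ones. Unwinding Theorem~\ref{cover_implies_Hopf}, the product is $m_\psym = \star\circ(1\otimes f)$ where $\star$ is the right $\ysym$-module action $\mu_r\colon\psym\circ\ysym\to\psym$ from Lemma~\ref{lem:opmod} and $f = \gamma\circ(1\circ\mathrm{id})$; tracing what $\mu_r$ does — grafting a split forest of painted trees onto the (now fully painted, since it is pushed through $\ysym$) tree $q$ — reproduces exactly the forest-grafting-onto-$q^+$ description, and the displayed examples confirm this. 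I would remark that the right-sided identity behavior $F_{\includegraphics{p0.eps}}\cdot F_q = F_{q^+}$ versus $F_q\cdot F_{\includegraphics{p0.eps}} = F_q$ is precisely the one-sidedness predicted by Theorem~\ref{cover_implies_Hopf}.

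For the antipode I would invoke the standard recursive construction for connected graded bialgebras, adapted to the one-sided setting as already noted in the proof of Theorem~\ref{cover_implies_Hopf}: since $\psym$ is graded with $\psym_0 = \K$ (connectedness, inherited because both factors are connected), one defines $S$ on $\psym_n$ by $S(F_p) = -F_p - \sum S(F_{p'})\cdot F_{p''}$ where the sum runs over the non-trivial part of $\Delta(F_p)$, and this converges degree by degree; it satisfies $m_\psym(S\otimes\mathrm{id})\Delta_\psym = \mu\varepsilon_\psym$. The unit map $\mu\colon\K\to\psym$ sends $1\mapsto F_{\includegraphics{p0.eps}}$.

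I do not expect a serious obstacle here — the theorem is a packaging of earlier results — but the one point requiring genuine care is verifying that the \emph{abstract} product coming from the module-coalgebra connection agrees with the \emph{combinatorial} product on the fundamental basis, i.e.\ correctly identifying the right $\ysym$-action $\mu_r$ on $\psym = \ysym\circ\ysym$ as ``split the painted tree into a forest, paint $q$ completely, graft.'' This amounts to chasing the definition of $\mu_r$ through the operad-module structure of Lemma~\ref{lem:opmod} and checking compatibility of the painting with splitting and grafting; the displayed worked examples serve as a sanity check but a careful proof should state the action explicitly and confirm coassociativity/compatibility with $\Delta_\psym$ as in Lemma~\ref{lem:coal}.
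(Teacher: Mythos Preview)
Your proposal is correct and follows essentially the same route as the paper: the paper observes just before the theorem that the identity map on $\ysym$ makes $\psym$ a connection on $\ysym$, invokes Theorem~\ref{cover_implies_Hopf} to get the one-sided Hopf structure, and notes that graded connectedness yields the antipode. Your additional care in matching the abstract product $\star\circ(1\otimes f)$ with the explicit grafting formula is a reasonable elaboration of what the paper leaves implicit.
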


The $\ysym$-Hopf module structure on $\psym$ from
Theorem~\ref{cover_implies_Hopf} has  coaction
 \[
    \rho(F_p)\  =\  \sum_{p \psplit (p_0,p_1)} F_{p_0} \otimes F_{f(p_1)}\,,
 \]
where the painting in $p$ is preserved in $p_0$ amd forgotten in
$p_1$.

Since painted trees and bi-leveled trees both index vertices of the
multiplihedra, these structures for $\psym$ give structures on the
linear span $\msym_+$ of bi-leveled trees with at least one node.

\begin{corollary}\label{thm: msym+ is Hopf}
 The $\ysym$ action and coaction defined in \cite[Section 4.1]{FLS:1}
 make $\msym_+$ into a Hopf module isomorphic to the Hopf module
 $\psym$. \hfill \qed
\end{corollary}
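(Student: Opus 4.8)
The plan is to exhibit an explicit linear isomorphism $\Phi\colon\psym\to\msym_+$ between the fundamental bases and check that it intertwines the structure maps. First I would recall the combinatorial bijection between painted binary trees on $n$ internal nodes and bi-leveled trees on $n$ nodes that underlies the identification of both with vertices of the multiplihedron $\m_{n+1}$; this bijection is implicit in the identification $\p_n\cong\m_{n+1}$ already used in Section~\ref{sec: painted} and is spelled out in~\cite{forcey1,FLS:1}. Sending $F_p\mapsto F_{b(p)}$, where $b(p)$ is the bi-leveled tree corresponding to $p$, defines $\Phi$ on bases; since the bijection respects the number of nodes, $\Phi$ is an isomorphism of graded vector spaces.

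Next I would verify that $\Phi$ is a morphism of coalgebras. Both coproducts are ``sum over splittings at one leaf'' formulas --- the one for $\psym$ is displayed just above Theorem~\ref{thm: painted is hopf}, and the one for $\msym_+$ is the coproduct of~\cite[Section 4.1]{FLS:1} --- so the claim reduces to the statement that the painting-preserving splitting of $p$ at a leaf matches, under $b(\Placeholder)$, the splitting of $b(p)$ at the corresponding leaf. This is a direct check on the combinatorics of the bijection. The same style of argument handles the $\ysym$-action: the product $F_p\cdot F_q=\sum F_{(p_0,\dots,p_r)/q^+}$ on $\psym$ and the $\ysym$-action on $\msym_+$ from~\cite{FLS:1} are both ``split, then graft'' rules, and one checks that $b(\Placeholder)$ carries grafting of a painted forest onto a completely-painted $q$ to the grafting operation defining the action on bi-leveled trees. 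Likewise the coaction $\rho(F_p)=\sum F_{p_0}\otimes F_{f(p_1)}$ matches the coaction of~\cite{FLS:1} because forgetting the painting on $p_1$ corresponds to the forgetful map on the bi-leveled side.

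Having checked that $\Phi$ respects coproduct, $\ysym$-action, and $\ysym$-coaction, it follows that $\Phi$ is an isomorphism of $\ysym$-Hopf modules, and then Theorem~\ref{cover_implies_Hopf} (applied as in Theorem~\ref{thm: painted is hopf}) transports the one-sided Hopf algebra and comodule-algebra structure across $\Phi$, giving the statement. The main obstacle is purely bookkeeping: one must pin down the precise form of the painted-tree/bi-leveled-tree bijection and confirm that it is compatible with splitting and grafting on the nose, including the bi-leveled-tree convention for the distinguished ``leftmost'' node and how completely-painted trees ($q^+$) correspond to bi-leveled trees all of whose nodes lie above the distinguished one. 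Once that dictionary is fixed, each compatibility is a routine case analysis, so no genuinely new idea is needed beyond the results already established for $\psym$.
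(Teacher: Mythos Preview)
Your approach is exactly what the paper intends. The paper gives no proof at all (the corollary is stated with a bare \qed), relying implicitly on the sentence immediately preceding it: since painted trees and bi-leveled trees both index vertices of the multiplihedra, the structures on $\psym$ transport to $\msym_+$ along that bijection. Your proposal simply spells out the obvious way to make this transport precise---define $\Phi$ on fundamental bases via the multiplihedron bijection and check that splitting and grafting are intertwined---which is the only reasonable reading of the paper's one-line justification. One small indexing caution: the paper identifies $\p_n$ with $\m_{n+1}$, so painted trees on $n$ internal nodes correspond to bi-leveled trees on $n{+}1$ internal nodes, not $n$; adjust your dictionary accordingly when you write out the bijection.
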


%
\subsection{Composite Trees}\label{sec: weighted}

In a forest of combs attached to a binary tree, the combs may be
replaced by corollae  or by a positive \demph{weight} counting the
number of leaves in the comb. These all give \demph{composite
trees}.
 \begin{equation}\label{Eq:composite_trees}
  \raisebox{-17pt}{${\displaystyle
   \includegraphics{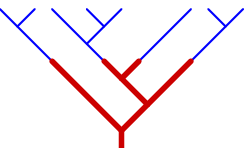} \quad\raisebox{17pt}{=}\quad
   \includegraphics{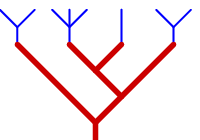} \quad\raisebox{17pt}{=}\quad
   \begin{picture}(50,37.5)(1,-3)
    \put(3,-3){\includegraphics{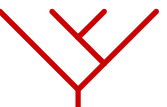}}
    \put( 0,28){$2$}    \put(15,28){$3$}
    \put(30,28){$1$}    \put(45,28){$2$}
   \end{picture}}$}
 \end{equation}
Composite trees with weights summing to $n{+}1$, $\ck_n$, were shown
to be the vertices of a $n$-dimensional polytope, the
\demph{composihedron}, $\ck(n)$~(\cite{forcey2}). This sequence of
polytopes is used to parameterize homotopy maps between strictly
associative and homotopy associative $H$-spaces. For small values of
$n$, the polytopes $\ck(n)$ also appear as the commuting diagrams in
enriched bicategories~(\cite{forcey2}). These diagrams appear in the
definition of pseudomonoids~\cite[App. C]{AguMah:2010}.

\subsubsection{Algebra structures on composite trees}

We describe the key definitions of Section~\ref{sec: cccc main
results} and Section~\ref{sec: hopf} for
$\Blue{\cksym}:=\ysym\circ\csym$. In the fundamental basis $\bigl\{
F_p \mid p \in \ck_\bb \bigr\}$ of $\cksym$, the counit is
$\varepsilon(F_p)=\delta_{0,|p|}$ and the coproduct is
 \begin{gather*}
        \Delta(F_p)\ =\ \sum_{p \psplit (p_0,p_1)} F_{p_0} \otimes F_{p_1}\,,
 \end{gather*}
 where the painting in $p\in\ck_\bb$ is preserved in the splitting $p\psplit(p_0,p_1)$.

Here is an example, written in terms of the weighted trees.
\[
   \Delta(F_{\CTIT{2}{1}{2}})\ =\
    F_{\CTO{1}}\otimes F_{\CTIT{2}{1}{2}} \ +\
    F_{\CTO{2}}\otimes F_{\CTIT{1}{1}{2}} \ +\
    F_{\CTI{2}{1}}\otimes F_{\CTI{1}{2}} \ +\
    F_{\CTIT{2}{1}{1}}\otimes F_{\CTO{2}}  \ +\
    F_{\CTIT{2}{1}{2}}\otimes F_{\CTO{1}} \,.
\]

For the product,  Theorem~\ref{cover_implies_Hopf} using the left
module coalgebra action defined in Lemma~\ref{lem:coal} gives
\[
   F_a \cdot F_b\ :=\ g(F_a)\star F_b\,,\qquad
   \mbox{where }a,b\in\ck_\bb\,,
\]
where $g\colon\cksym\to\csym$ is the connection. On the indices, it
sends a composite tree $a$ to the unique comb $g(a)$ with the same
number of nodes as $a$. For the action $\star$, $g(a)$ is split in
all ways to make a forest of $|b|{+}1$ combs, which are grafted onto
the leaves of the forest of combs in $b$, then each tree in the
forest is combed and attached to the binary tree in $b$. We
illustrate one term in the product. Suppose that
\newline
   $a=$
   \raisebox{-3pt}{\begin{picture}(14,18)\put(1.5,0){\includegraphics[height=12pt]{1.d.eps}}
    \put(0,13.5){\small 2}\put(11,13.5){\small 1}\end{picture}}
   $ = \raisebox{-3pt}{\includegraphics{p213.d.eps}}$
     and  $b = $
   \raisebox{-3pt}{\begin{picture}(22,23)\put(2,0){\includegraphics[height=16pt]{12.d.eps}}
    \put(0,17.5){\small 1}\put(10,17.5){\small 2}\put(20,17.5){\small 1}\end{picture}}
    $= $
   \raisebox{-3pt}{\includegraphics[height=22pt]{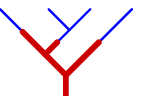}}.
   Then $g(a) = $
   \includegraphics{21.d.eps}\ .
    One way to split $g(a)$ gives the forest
    $ \bigl(\, \raisebox{-3pt}{\includegraphics{0.d.eps}\;,\;
      \includegraphics{1.d.eps}\;,\;
      \includegraphics{0.d.eps}\;,\;
      \includegraphics{1.d.eps}}\;\bigr)$.
   Graft this onto $b$ to get
   \raisebox{-3pt}{\includegraphics{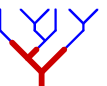}},
   then comb the forest to get
   \raisebox{-3pt}{\includegraphics{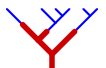}},
   which is
   \raisebox{-3pt}{\begin{picture}(22,23)\put(2,0){\includegraphics[height=16pt]{12.d.eps}}
    \put(0,17.5){\small 1}\put(10,17.5){\small 3}\put(20,17.5){\small 2}\end{picture}}.\vspace{2pt}
   Doing this for the other nine splittings of $g(a)$ gives,
\[
    F_{\CTI{2}{1}}\cdot F_{\CTIT{1}{2}{1}}\ =\
    F_{\CTIT{3}{2}{1}}\ +\   3  F_{\CTIT{1}{4}{1}}\ +\
    F_{\CTIT{1}{2}{3}}\ +\   2  F_{\CTIT{2}{3}{1}}\ +\
    F_{\CTIT{2}{2}{2}}\ +\    2 F_{\CTIT{1}{3}{2}}\,.
\]

\subsection{Composition trees}\label{sec: simplices}

The simplest composition of Fig.~\ref{fig: diamond} is
$\csym\circ\csym$, whose basis is indexed by combs over combs. If we
represent these as weighted trees as in~\eqref{Eq:composite_trees},
we see that we may identify combs over combs with $n$ internal nodes
as compositions of $n{+}1$. Thus we refer to these as
\demph{composition trees}.
\[
   \includegraphics{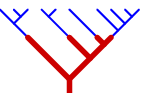}\
   \raisebox{10pt}{$\Longleftrightarrow$}\
   \begin{picture}(28,24)
    \put(2.5,0){\includegraphics{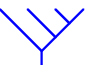}}
    \put( 0,17.5){\small 3} \put( 8,17.5){\small 2}
    \put(15.5,17.5){\small 1} \put(24,17.5){\small 4}
   \end{picture}\
   \raisebox{10pt}{$\Longleftrightarrow  \  (3,2,1,4)$\,.}
\]
The coproduct is again given by splitting. Since $(1,3)$ has the
four splittings,
 \begin{equation}\label{Eq:comp_split}
   \raisebox{-3pt}{\includegraphics{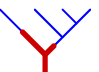}}\
   \raisebox{3pt}{$\xrightarrow{\ \curlyvee\ }$}\
   \Bigl(\;\raisebox{-3pt}{\includegraphics{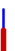}}\,,
         \raisebox{-3pt}{\includegraphics{p3421.s.eps}}\Bigr)
   \,,\quad
   \Bigl(\raisebox{-3pt}{\includegraphics{p12.d.eps}}\,,
         \raisebox{-3pt}{\includegraphics{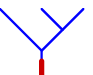}}\Bigr)
    \,,\quad
   \Bigl(\raisebox{-3pt}{\includegraphics{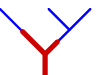}}\,,
         \raisebox{-3pt}{\includegraphics{p21.d.eps}}\Bigr)
    \,,\quad
   \Bigl(\raisebox{-3pt}{\includegraphics{p3421.s.eps}}\,,
         \raisebox{-3pt}{\includegraphics{p0.d.eps}}\;\Bigr)\,,
 \end{equation}
we have $\Delta(F_{1,3}) = F_{1}\otimes F_{1,3}+F_{1,1}\otimes
F_{3}+
   F_{1,2}\otimes F_{2}+ F_{1,3}\otimes F_{1}$.

As we remarked, there are two connections  $\csym\circ\csym \to
\csym$, using either the right or left action of $\csym$. This gives
two new one-sided Hopf algebra structures on compositions. With the
right action, we have $F_{1,3}\cdot
F_2=2F_{1,1,3}+F_{1,2,2}+F_{1,3,1}$, as
 \begin{equation}\label{Eq:prodCC}
   F_{\;\includegraphics{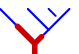}} \cdot   F_{\;\includegraphics{p21.eps}}
   \ =\
   F_{\;\includegraphics{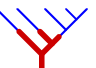}}\ +\ F_{\;\includegraphics{p35421.eps}}\ +\
   F_{\;\includegraphics{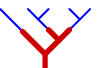}}\ +\ F_{\;\includegraphics{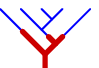}}\,,
 \end{equation}
which may be seen by grafting the different
splittings~\eqref{Eq:comp_split} onto the tree
$\includegraphics{p21.eps}$ and coloring
$\includegraphics{p21.eps}$.

\cite{ForSpr:2010} defined a one-sided Hopf algebra $\Delta{S}ym$ on
the graded vector space spanned by the faces of the simplices. Faces
of the simplices correspond to subsets of $[n]$. Here is an example
of the coproduct of the basis element corresponding to
$\{1\}\subset[4],$ where subsets of $[n]$ are illustrated as circled
subsets of the circled edgeless graph on $n$ nodes numbered left to
right:
\[
  \Delta\raisebox{-3pt}{\includegraphics{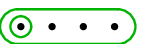}}\ =\
  \raisebox{-3pt}{\includegraphics{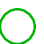}} \otimes
  \raisebox{-3pt}{\includegraphics{1.4.g.eps}} \ +\
  \raisebox{-3pt}{\includegraphics{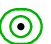}} \otimes
  \raisebox{-3pt}{\includegraphics{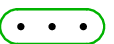}} \ +\
  \raisebox{-3pt}{\includegraphics{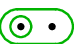}} \otimes
  \raisebox{-3pt}{\includegraphics{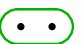}} \ +\
  \raisebox{-3pt}{\includegraphics{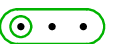}} \otimes
  \raisebox{-3pt}{\includegraphics{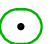}} \ +\
  \raisebox{-3pt}{\includegraphics{1.4.g.eps}} \otimes
  \raisebox{-3pt}{\includegraphics{0.0.g.eps}}
\]
Here is an example of the product
\[
  \raisebox{-3pt}{\includegraphics{0.1.g.eps}}\cdot
  \raisebox{-3pt}{\includegraphics{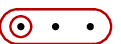}}\ =\
  \raisebox{-3pt}{\includegraphics{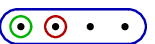}}\ +\
  \raisebox{-3pt}{\includegraphics{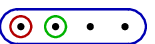}}\ +\
  \raisebox{-3pt}{\includegraphics{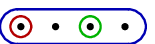}}\ +\
  \raisebox{-3pt}{\includegraphics{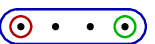}}\,.
\]

Let $\varphi$ denote the bijection between subsets $S =\{
a,b,\dots\,c\} \subset[n]$ and compositions
$\varphi(S)=(a,b-a,\dots,n+1-c)$ of $n+1$. Applying this bijection
the indices of their fundamental bases gives a linear isomorphism
$\bvarphi\colon\Delta{S}ym \xrightarrow{\sim}\csym\circ\csym$, which
is nearly an isomorphism of one-sided Hopf algebras, as may be seen
by comparing these schematics of operations in $\Delta{S}ym$ to
formulas~\eqref{Eq:comp_split} and~\eqref{Eq:prodCC} in
$\csym\circ\csym$.

\begin{theorem}
 The map $\bvarphi$ is an isomorphism of coalgebras and an anti-isomorphism
 ($\bvarphi(a\cdot b)=\bvarphi(a)\cdot\bvarphi(b)$) of
 one-sided algebras.
\end{theorem}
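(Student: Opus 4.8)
The plan is to check separately that $\bvarphi$ respects the counit and coproduct, so that it is an isomorphism of coalgebras, and that it reverses the product, so that it is an anti-isomorphism of one-sided algebras; both verifications amount to matching two combinatorial recipes term by term under the classical bijection $\varphi$. I would begin by recording the set-level facts: $\varphi$ sends a subset $S=\{a_1<\dots<a_j\}\subseteq[n]$ to the composition $\varphi(S)=(a_1,\,a_2{-}a_1,\,\dots,\,n{+}1{-}a_j)$ of $n{+}1$, which is then identified with the comb-over-comb (composition tree) on $n$ internal nodes as in \eqref{Eq:composite_trees}. This is a bijection that is homogeneous of degree $n$, so $\bvarphi$ is a graded linear isomorphism, and it trivially intertwines the counits, since on each side $\varepsilon$ is projection onto the one-dimensional degree-$0$ component.

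For the coproduct I would verify $\Delta_{\csym\circ\csym}\circ\bvarphi=(\bvarphi\otimes\bvarphi)\circ\Delta_{\Delta{S}ym}$. Both coproducts are deconcatenations: on the simplex side the coproduct of a face $S\subseteq[n]$ is the sum over the $n{+}1$ cuts of the totally ordered vertex set $[n]$, with the circled subset distributed between the two pieces; on the $\csym\circ\csym$ side the coproduct of a composition tree is the sum over its $n{+}1$ splittings at a leaf, as in \eqref{Eq:comp_split}. The step is to set up the bijection between cuts of $[n]$ and leaf-splittings of the associated tree and to check that $\varphi$ carries the pair produced by one to the pair produced by the other, with the circled elements going to the same side that inherits the painting. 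The examples (the coproduct of $\{1\}\subseteq[4]$ versus $\Delta(F_{1,3})$, and \eqref{Eq:comp_split}) are instances; since $\bvarphi$ is already known to be a linear bijection, the general match yields the coalgebra isomorphism.

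For the product I would use the description of the multiplication of $\csym\circ\csym$ coming from Theorem~\ref{cover_implies_Hopf} and Theorem~\ref{thm:suffcover}: with the right action, $F_a\cdot F_b=g(F_a)\star F_b$, where $g\colon\csym\circ\csym\to\csym$ is the connection sending a composition tree to the comb on the same number of nodes and $\star$ is the $\csym$-module action (split $g(a)$ into a forest of $|b|{+}1$ combs, graft onto the comb-forest of $b$, recomb, reattach to the binary part of $b$), exactly the recipe producing \eqref{Eq:prodCC}. Against this I would place the multiplication of $\Delta{S}ym$ from \cite{ForSpr:2010}, translated through $\varphi$, and check term-by-term (with matching multiplicities) that it equals $g(a)\star F_b$ with the roles of the two factors exchanged---the first factor of the $\Delta{S}ym$ product becoming the second factor in $\csym\circ\csym$---which is precisely why $\bvarphi$ is an \emph{anti}-isomorphism; as a consistency check, $\bvarphi$ then sends the left unit of $\Delta{S}ym$ to the right unit $f^{-1}(1_{\csym})$ of $\csym\circ\csym$.

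I expect the product comparison to be the only real obstacle. The counit statement and the graded-vector-space isomorphism are immediate, and the coproduct match is straightforward deconcatenation bookkeeping; but the two product rules are defined by genuinely different procedures---one intrinsic to circled subsets of $[n]$, the other routed through the connection $g$ and the module action of $\csym$ on $\csym\circ\csym$---so showing they coincide under $\varphi$ requires unwinding both definitions in detail, and the one-sidedness of the algebras forces one to keep careful track of the left/right asymmetry (hence of the factor reversal) rather than treat it as automatic.
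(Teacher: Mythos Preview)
Your plan is sound and matches what the paper does, which is essentially nothing: this is an extended abstract, and the paper offers no proof beyond the sentence preceding the theorem, ``as may be seen by comparing these schematics of operations in $\Delta{S}ym$ to formulas~\eqref{Eq:comp_split} and~\eqref{Eq:prodCC} in $\csym\circ\csym$.'' Your proposal simply carries out that comparison in detail---checking the counit, matching the deconcatenation coproducts term by term under $\varphi$, and unwinding the two product recipes---so there is no methodological difference to discuss.

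One remark: you correctly read ``anti-isomorphism'' as $\bvarphi(a\cdot b)=\bvarphi(b)\cdot\bvarphi(a)$, and your comparison of the displayed examples (the $\Delta{S}ym$ product of a degree-$1$ face with $\{1\}\subset[3]$ versus $F_{1,3}\cdot F_{2}$ in~\eqref{Eq:prodCC}) confirms the factor reversal. The parenthetical in the theorem statement as printed, $\bvarphi(a\cdot b)=\bvarphi(a)\cdot\bvarphi(b)$, is evidently a typo; your consistency check with the one-sided units (left unit in $\Delta{S}ym$ mapping to the right unit of $\csym\circ\csym$ under the right $\csym$-action, per the proof of Theorem~\ref{cover_implies_Hopf}) is the right way to pin down the orientation.
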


\begin{corollary}\label{thm: F-S is cofree}
 The one-sided Hopf algebra of simplices introduced in (\cite{ForSpr:2010}) is cofree as a coalgebra.
\end{corollary}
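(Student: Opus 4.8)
The plan is to deduce the corollary from the preceding theorem (the isomorphism $\bvarphi\colon\Delta{S}ym\to\csym\circ\csym$ of coalgebras) together with Theorem~\ref{thm: cc cofree}. The argument is almost immediate: $\bvarphi$ is an isomorphism of coalgebras, so $\Delta{S}ym$ is isomorphic as a coalgebra to $\csym\circ\csym$; since $\csym$ is cofree (Proposition~\ref{thm: cofree examples}), Theorem~\ref{thm: cc cofree} tells us that $\csym\circ\csym$ is cofree; hence so is $\Delta{S}ym$. The fact that $\bvarphi$ is only an \emph{anti}-isomorphism of one-sided algebras is irrelevant here, since cofreeness is a purely coalgebraic property and $\bvarphi$ respects the coproduct on the nose.

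Concretely I would proceed as follows. First I would recall that cofreeness is transported along coalgebra isomorphisms: if $\calC\simeq\calC'$ as coalgebras and $\calC'$ is cofree, then $\calC$ is cofree, with space of primitives carried isomorphically by the same map. Next I would invoke the theorem just proved, which gives that $\bvarphi$ is an isomorphism of coalgebras $\Delta{S}ym\xrightarrow{\ \sim\ }\csym\circ\csym$. Then I would apply Theorem~\ref{thm: cc cofree} with $\calC=\calD=\csym$: since $\csym$ is cofree (the easy case of Proposition~\ref{thm: cofree examples}, whose primitives are indexed by the single comb in each degree), the composition $\csym\circ\csym$ is cofree. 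Combining the last two steps, $\Delta{S}ym$ is cofree as a coalgebra, which is the claim.

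If one wants to be slightly more explicit about the space of primitives, Theorem~\ref{thm: cc cofree} identifies the primitives of $\csym\circ\csym$ as spanned by the indecomposable tensors $\lrcompose{1}{c_1}{c_{n-1}}{1}{\delta}$ and $\zcompose{\,\gamma\,}{1}$ with $\gamma,\delta$ primitive in $\csym$; since $P_{\csym}$ is one-dimensional in each positive degree and the $c_i$ range over all of $\csym$, one can pull these back along $\bvarphi^{-1}$ to describe the primitives of $\Delta{S}ym$ in terms of subsets of $[n]$ if desired, though this is not needed for the statement.

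The main (and essentially only) obstacle is making sure the preceding theorem genuinely gives an isomorphism of coalgebras and not merely of vector spaces — i.e.\ that $\bvarphi$ intertwines the coproducts exactly. The schematic comparison of coproducts (the splitting formula for $\Delta{S}ym$ versus formula~\eqref{Eq:comp_split}) is what underlies that theorem, and once it is in hand the corollary is a one-line formal consequence. No delicate estimates or constructions are required.
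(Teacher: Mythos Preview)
Your proposal is correct and is exactly the argument the paper intends: the corollary is stated immediately after the theorem asserting that $\bvarphi$ is a coalgebra isomorphism, and the paper gives no further proof because it follows at once from that theorem together with Theorem~\ref{thm: cc cofree} and the cofreeness of $\csym$ from Proposition~\ref{thm: cofree examples}. Your observation that the anti-isomorphism on the algebra side is irrelevant is also apt.
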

\newpage


\begin{thebibliography}{16}
\providecommand{\natexlab}[1]{#1}
\providecommand{\url}[1]{\texttt{#1}} \expandafter\ifx\csname
urlstyle\endcsname\relax
  \providecommand{\doi}[1]{doi: #1}\else
  \providecommand{\doi}{doi: \begingroup \urlstyle{rm}\Url}\fi

\bibitem[Aguiar and Mahajan(2010)]{AguMah:2010}
M.~Aguiar and S.~Mahajan.
\newblock \emph{Monoidal functors, species and {H}opf algebras}, volume~29 of
  \emph{CRM Monograph Series}.
\newblock American Mathematical Society, Providence, RI, 2010.\vspace{-1pt}

\bibitem[Aguiar and Sottile(2005)]{AguSot:2005}
M.~Aguiar and F.~Sottile.
\newblock Structure of the {M}alvenuto-{R}eutenauer {H}opf algebra of
  permutations.
\newblock \emph{Adv. Math.}, 191\penalty0 (2):\penalty0 225--275, 2005.\vspace{-1pt}

\bibitem[Aguiar and Sottile(2006)]{AguSot:2006}
M.~Aguiar and F.~Sottile.
\newblock Structure of the {L}oday-{R}onco {H}opf algebra of trees.
\newblock \emph{J. Algebra}, 295\penalty0 (2):\penalty0 473--511, 2006.\vspace{-1pt}

\bibitem[Boardman and Vogt(1973)]{BoaVog:1973}
J.~M. Boardman and R.~M. Vogt.
\newblock \emph{Homotopy invariant algebraic structures on topological spaces}.
\newblock Lecture Notes in Mathematics, Vol. 347. Springer-Verlag, Berlin,
  1973.\vspace{-1pt}

\bibitem[Forcey(2008{\natexlab{a}})]{forcey1}
S.~Forcey.
\newblock Convex hull realizations of the multiplihedra.
\newblock \emph{Topology Appl.}, 156\penalty0 (2):\penalty0 326--347,
  2008{\natexlab{a}}.\vspace{-1pt}

\bibitem[Forcey(2008{\natexlab{b}})]{forcey2}
S.~Forcey.
\newblock Quotients of the multiplihedron as categorified associahedra.
\newblock \emph{Homology, Homotopy Appl.}, 10\penalty0 (2):\penalty0 227--256,
  2008{\natexlab{b}}.\vspace{-1pt}

\bibitem[Forcey and Springfield(2010)]{ForSpr:2010}
S.~Forcey and D.~Springfield.
\newblock Geometric combinatorial algebras: cyclohedron and simplex, 2010.
\newblock \emph{J. Alg. Combin.}, DOI: 10.1007/s10801-010-0229-5.\vspace{-1pt}

\bibitem[Forcey et~al.(2010)Forcey, Lauve, and Sottile]{FLS:1}
S.~Forcey, A.~Lauve, and F.~Sottile.
\newblock Hopf structures on the multiplihedra.
\newblock \emph{SIAM J. Discrete Math.}, DOI: 10.1137/090776834.\vspace{-1pt}

\bibitem[Getzler and Jones()]{GetJon:1}
E.~Getzler and J.~Jones.
\newblock Operads, homotopy algebra and iterated integrals for double loop
  spaces.
\newblock preprint, arXiv:hep-th/9403055.\vspace{-1pt}

\bibitem[Loday and Ronco(1998)]{LodRon:1998}
J.-L. Loday and M.~O. Ronco.
\newblock Hopf algebra of the planar binary trees.
\newblock \emph{Adv. Math.}, 139\penalty0 (2):\penalty0 293--309, 1998.\vspace{-1pt}

\bibitem[Malvenuto and Reutenauer(1995)]{MalReu:1995}
C.~Malvenuto and C.~Reutenauer.
\newblock Duality between quasi-symmetric functions and the {S}olomon descent
  algebra.
\newblock \emph{J. Algebra}, 177\penalty0 (3):\penalty0 967--982, 1995.\vspace{-1pt}

\bibitem[Montgomery(1993)]{Mont:1993}
S.~Montgomery.
\newblock \emph{Hopf algebras and their actions on rings}, volume~82 of
  \emph{CBMS Regional Conference Series in Mathematics}.
\newblock Conference Board of the Mathematical Sciences,
  Washington, DC, 1993.\vspace{-1pt}

\bibitem[Sloane()]{Slo:oeis}
N.~J.~A. Sloane.
\newblock The on-line encyclopedia of integer sequences.
\newblock published electronically at {\tt
  www.research.att.com/$\scriptstyle\sim$njas/sequences/}.\vspace{-1pt}

\bibitem[Stasheff(1963)]{Sta:1963}
J.~Stasheff.
\newblock Homotopy associativity of {$H$}-spaces. {I}, {II}.
\newblock \emph{Trans. Amer. Math. Soc. {\bf108} (1963), 275--292; ibid.},
  108:\penalty0 293--312, 1963.\vspace{-1pt}

\bibitem[Stasheff(1970)]{Sta:1970}
J.~Stasheff.
\newblock \emph{{$H$}-spaces from a homotopy point of view}.
\newblock Lecture Notes in Mathematics, Vol. 161. Springer-Verlag, Berlin,
  1970.\vspace{-1pt}

\bibitem[Takeuchi(1971)]{Tak:71}
M.~Takeuchi.
\newblock Free {H}opf algebras generated by coalgebras.
\newblock \emph{J. Math. Soc. Japan}, 23:\penalty0 561--582, 1971.
\newpage

\end{thebibliography}
\end{document}